\newtheorem{Thm}{Theorem}[section]
\newtheorem{Lmm}[Thm]{Lemma}
\theoremstyle{definition}
\newtheorem{Exp}[Thm]{Example}
\theoremstyle{remark}
\numberwithin{equation}{section}
\def\Ker{\mathrm{Ker}}
\def\Im{\mathrm{Im}}
\def\N{\mathbb{N}}
\def\Z{\mathbb{Z}}
\def\Q{\mathbb{Q}}
\newcommand{\icases}[7]{#7\{\!\begin{smallmatrix}
                                #5#1\hfill;  & \,#5#2\hfill &\!\!\\[#6]
                                #5#3\hfill;  & \,#5#4\hfill &\!\!
                              \end{smallmatrix}}
\def\cone{\mathop{\rm Cone}\nolimits}
\newcommand{\ena}[1]{[#1]}          
\newcommand{\dva}[1]{[\![#1]\!]}    
\begin{document}
\title[Torsion table for the Lie algebra $\frak{nil}_n$]{Torsion table for the Lie algebra $\frak{nil}_n$}
\author{Leon Lampret}
\address{Department of Mathematics, University of Ljubljana, Slovenia}
\email{leon.lampret@fmf.uni-lj.si}
\author{Aleš Vavpetič}
\address{Department of Mathematics, University of Ljubljana, Slovenia}
\email{ales.vavpetic@fmf.uni-lj.si}
\date{November 2, 2017}
\keywords{algebraic/discrete Morse theory, homological algebra, chain complex, acyclic matching, nilpotent Lie algebra, triangular matrices, torsion table, algebraic combinatorics}
\subjclass[2010]{17B56, 13P20, 13D02, 55-04, 18G35, 58E05}

\begin{abstract}
We study the Lie ring $\mathfrak{nil}_n$ of all strictly upper-triangular $n\!\times\!n$ matrices with entries in $\Z$. Its complete homology for $n\!\leq\!8$ is computed.
\par We prove that every $p^m$-torsion appears in $H_\ast(\mathfrak{nil}_n;\Z)$ for $p^m\!\leq\!n\!-\!2$. For $m\!=\!1$, Dwyer proved that the bound is sharp, i.e. there is no $p$-torsion in $H_\ast(\mathfrak{nil}_n;\Z)$ when prime $p\!>\!n\!-\!2$. In general, for $m\!>\!1$ the bound is not sharp, as we show that there is $8$-torsion in $H_\ast(\mathfrak{nil}_8;\Z)$.
\par As a sideproduct, we derive the known result, that the ranks of the free part of $H_\ast(\mathfrak{nil}_n;\Z)$ are the Mahonian numbers (=number of permutations of $[n]$ with $k$ inversions), using a different approach than Kostant. Furthermore, we determine the algebra structure (cup products) of $H^\ast(\mathfrak{nil}_n;\Q)$.\vspace{-3mm}
\end{abstract}

\maketitle

\section{Introduction}
Let $\mathfrak{nil}_n$ be the Lie algebra of integral $n\!\times\!n$ strictly upper-triangular matrices. The complete homology $H_k(\mathfrak{nil}_n;\Z)$ is known only for $n\!\leq\!6$ \cite{citearticleJollenbeckADMTACA}:
\begin{center} \begin{tabular}{ l | l l l l l }
    $k\backslash n$ & 2 & 3 & 4 & 5 & 6 \\ \hline
    0 & $\Z$ & $\Z$ & $\Z$ & $\Z$ & $\Z$ \\
    1 & $\Z$ & $\Z^2$ & $\Z^3$ & $\Z^4$ & $\Z^5$ \\
    2 & & $\Z^2$ & $\Z^5\oplus\Z_2$ & $\Z^9\oplus\Z_2^2$ & $\Z^{14}\oplus\Z_2^{3}$ \\
    3 & & $\Z$ & $\Z^6\oplus\Z_2$ & $\Z^{15}\oplus\Z_2^8\oplus\Z_3^2$ & $\Z^{29}\oplus\Z_2^{20}\oplus\Z_3^4$ \\
    4 &&& $\Z^5$ & $\Z^{20}\oplus\Z_2^{10}\oplus\Z_3^3$ & $\Z^{49}\oplus\Z_2^{47}\oplus\Z_3^{13}\oplus\Z_4^{3}$ \\
    5 &&& $\Z^3$ & $\Z^{22}\oplus\Z_2^{10}\oplus\Z_3^3$ & $\Z^{71}\oplus\Z_2^{79}\oplus\Z_3^{26}\oplus\Z_4^{9}$ \\
    6 &&& $\Z$ & $\Z^{20}\oplus\Z_2^{8}\oplus\Z_3^2$ & $\Z^{90}\oplus\Z_2^{118}\oplus\Z_3^{35}\oplus\Z_4^{12}$ \\
    7 &&& & $\Z^{15}\oplus\Z_2^{2}$ & $\Z^{101}\oplus\Z_2^{138}\oplus\Z_3^{36}\oplus\Z_4^{12}$ \\
    8 &&& & $\Z^{9}$ & $\Z^{101}\oplus\Z_2^{118}\oplus\Z_3^{35}\oplus\Z_4^{12}$ \\
    9 &&& & $\Z^{4}$ & $\Z^{90}\oplus\Z_2^{79}\oplus\Z_3^{26}\oplus\Z_4^{9}$ \\
    10&&& & $\Z$ & $\Z^{71}\oplus\Z_2^{47}\oplus\Z_3^{13}\oplus\Z_4^{3}$ \\
    11&&& & & $\Z^{49}\oplus\Z_2^{20}\oplus\Z_3^{4}$\\
    12&&& & & $\Z^{29}\oplus\Z_2^{3}$\\
    13&&& & & $\Z^{14}$\\
    14&&& & & $\Z^5$\\
    15&&& & & $\Z$\\
\end{tabular}\end{center}
The main reason why computations for larger $n$ are exceedingly difficult is that the chain complex $C_\ast\!=\! \Lambda^{\!\ast}\mathfrak{nil}_n$ is immense. It has \smash{$2^{\binom{n}{2}}$} generators, which is more than 2 million for $n\!=\!7$. In the paper, we divide $C_\ast$ in numerous direct summands $\dva{w}$ (corresponding to sequences $w\!\in\!\{1,\ldots,n\}^n$ with $w_1\!+\!\ldots\!+\!w_n\!=\!\binom{n+1}{2}$) and show how many of them are isomorphic (up to dimension shift), many are contractible and many are obtained from smaller ones as the cone of a chain map $\dva{w'}\!\overset{\cdot t}{\to}\!\dva{w'}$. The direct summands corresponding to permutations of $(1,\ldots,n)$ are generated by just one element, hence those contribute only to free part. We show that any other direct summand contributes only to torsion, so we get $H^{\ast\!}(\frak{nil}_n;\Q)$.

\subsection*{Complex} Let $e_{ij}$ be the matrix with all entries 0 except 1 in the position $(i,j)$. The chain complex $C_\ast\!=\!\Lambda^{\!\ast}\mathfrak{nil}_n$, due to Chevalley (1948), is generated by wedges $e_{a_1b_1}\!\!\wedge\!\ldots\!\wedge\!e_{a_kb_k}$\!, where $1\!\leq\!a_i\!<\!b_i\!\leq\!n$ for all $i$. From now on, for the sake of brevity, we shall omit the $\wedge$ symbols. 
The boundary is defined by\vspace{-1mm}
\[\textstyle{\partial (e_{a_1b_1}\!\ldots e_{a_kb_k})= \sum_{i<j} (-1)^{i+j}
[e_{a_ib_i},e_{a_jb_j}]e_{a_1b_1}\!\ldots\widehat{e_{a_ib_i}}\!\ldots \widehat{e_{a_jb_j}}\!\ldots e_{a_kb_k},}\vspace{-1mm}\]
where $[e_{ab},e_{cd}]$ equals $e_{ad}$ if $b\!=\!c$, equals $-e_{cb}$ if $a\!=\!d$, and equals 0 otherwise.

\subsection*{AMT} For some computations later on, we shall use algebraic Morse theory, so we include a short review of it. To a chain complex of free modules $(C_\ast,\partial_\ast)$ we associate a weighted digraph $\Gamma_{C_\ast}$ (vertices are basis elements of $C_\ast$, weights of edges are nonzero entries of matrices $\partial_\ast$). Then we carefully select a matching $\mathcal{M}$ in this digraph, so that its edges have invertible weights and if we reverse the direction of every $e\!\in\!\mathcal{M}$ in $\Gamma_{B_\ast}$, the obtained digraph $\Gamma^{\mathcal{M}}_{C_\ast}$ contains no directed cycles and no infinite paths in two adjacent degrees. Under these conditions (i.e. if $\mathcal{M}$ is a \emph{Morse matching}), the AMT theorem  (\cite{citearticleSkoldbergMTFAV}, \cite{citearticleJollenbeckADMTACA}, \cite{citeKozlovDMTFCC}) provides a homotopy equivalent complex $(\mathring{C}_\ast,\mathring{\partial}_\ast)$, spanned by the unmatched vertices in $\Gamma^{\mathcal{M}}_{C_\ast}$, and with the boundary $\mathring{\partial}_\ast$ of $v\!\in\!\mathring{C}_k$ given by the sum of weights of directed paths in $\Gamma^{\mathcal{M}}_{C_\ast}$ to all critical $v'\!\in\!\mathring{C}_{k-1}$. For more details, we refer the reader to the three articles above (which specify the homotopy equivalence), or \cite{citeLampretVavpeticCLAAMT} for a quick introduction and formulation.

\vspace{3mm}
\section{Subcomplexes}
For a set $M\!=\!\{(a_1,b_1),\ldots,(a_k,b_k)\}\!\subseteq\! \{(i,j); 1\!\leq\!i\!<\!j\!\leq\!n\}$ we denote $e_M\!=\!e_{a_1b_1}\ldots e_{a_kb_k}$. For $M_i\!:=\!\{x; (i,x)\!\in\!M\}$ we have $e_M\!=\!\wedge_{i=1}^{\!n-\!1}\!e_{\{i\}\times M_i}$. We define the \emph{weight} vector $\widetilde{w}(e_M)\!=\!(\widetilde{w}_1,\ldots,\widetilde{w}_n)$ by $\widetilde{w}_i\!=\! |\{x;(x,i)\!\in\!M\}|\!-\!|\{y;(i,y)\!\in\!M\}|$, i.e. the number of times $i$ appears on the right in $e_M$ minus the number of times $i$ appears on the left in $e_M$. Then $\sum_{i=1}^n\!\widetilde w_i\!=\!0$. Every summand in $\partial(e_M)$ has the same weight as $e_M$. Therefore a submodule $\ena{\widetilde w}$ of $\Lambda^{\!\ast}\mathfrak{nil}_n$, spanned by the basis elements with weight $\widetilde{w}$, forms a chain subcomplex which is a direct summand.
\par Most equalities will be described more conveniently using the \emph{modified weight} $w(e_M)\!=\!(1,\ldots,n)\!-\!\widetilde{w}(e_M)\!=\!(1\!-\!\widetilde{w}_1,\ldots, n\!-\!\widetilde{w}_n)$. Then $\sum_{i=1}^n\!w_i\!=\!\binom{n+1}{2}$ and $i\!-\!n\!\leq\! \widetilde{w}_i\!\leq\! i\!-\!1$ implies $1\!\le\!w_i\!\leq\!n$ for all $i$. We denote $\dva{w}\!=\!\ena{(1,\ldots,n)\!-\!w}$ and let $\dva{w}_k$ be the complex $\dva{w}$ dimensionally shifted by $k$. Let $\mathcal{S}_n\!:=\!\big\{(w_1,\ldots,w_n)\!\in\!\{1,\ldots,n\}^n;\, w_1\!+\!\ldots\!+\!w_n\!=\!\binom{n+1}{2}\big\}$, so that $\Lambda^{\!\ast}\mathfrak{nil}_n\!=\!\bigoplus_{w\in\mathcal{S}_n}\dva{w}$. Notice that $\dva{w_1,\ldots,w_{n\!-\!1},n} \!=\! \dva{w_1,\ldots,w_{n\!-\!1}}$ and $\dva{1,w_2,\ldots,w_n} \!=\! \dva{w_2\!-\!1,\ldots,w_n\!-\!1}$.

\begin{Exp} Let us take a look at bracket subcomplexes in $\Lambda^{\!\ast}\frak{nil}_n$ for $n\!\leq\!4$.
\par Set $\mathcal{S}_2$ consists of permutations of $(1,2)$. Furthermore, there holds $H_k\dva{1,2}\!=\! H_k\langle\emptyset\rangle\!\cong\! \icases{\Z}{\text{if }k=0}{0}{\text{if }k\neq0}{\scriptstyle}{-0pt}{\Big}$ and $H_k\dva{2,1}\!=\! H_k\langle e_{12}\rangle\!\cong\! \icases{\Z}{\text{if }k=1}{0}{\text{if }k\neq1}{\scriptstyle}{-0pt}{\Big}$.
\par Set $\mathcal{S}_3$ consists of permutations of $(1,2,3),(2,2,2)$. Furthermore,
$H_k\dva{1,2,3}\!=\! H_k\langle\emptyset\rangle\!\cong\! \icases{\Z}{\text{if }k=0}{0}{\text{if }k\neq0}{\scriptstyle}{-0pt}{\Big}$,
$H_k\dva{1,3,2}\!=\! H_k\langle e_{23}\rangle\!\cong\! \icases{\Z}{\text{if }k=1}{0}{\text{if }k\neq1}{\scriptstyle}{-0pt}{\Big}$,
$H_k\dva{2,1,3}\!=\! H_k\langle e_{12}\rangle\!\cong\! \icases{\Z}{\text{if }k=1}{0}{\text{if }k\neq1}{\scriptstyle}{-0pt}{\Big}$,
$H_k\dva{2,3,1}\!=\! H_k\langle e_{13}e_{23}\rangle\!\cong\! \icases{\Z}{\text{if }k=2}{0}{\text{if }k\neq2}{\scriptstyle}{-0pt}{\Big}$,
$H_k\dva{3,1,2}\!=\! H_k\langle e_{12}e_{13}\rangle\!\cong\! \icases{\Z}{\text{if }k=2}{0}{\text{if }k\neq2}{\scriptstyle}{-0pt}{\Big}$,
$H_k\dva{3,2,1}\!=\! H_k\langle e_{12}e_{13}e_{23}\rangle\!\cong\! \icases{\Z}{\text{if }k=3}{0}{\text{if }k\neq3}{\scriptstyle}{-0pt}{\Big}$,
$H_k\dva{2,2,2}\!=\! H_k\langle e_{13},e_{12}e_{23}\rangle\!\simeq\! 0$.
\par Set $\mathcal{S}_4$ consists of permutations of $(1,\!1,\!4,\!4),(1,\!2,\!3,\!4),(1,\!3,\!3,\!3),(2,\!2,\!2,\!4),(2,\!2,\!3,\!3)$.
The largest complexes are $\dva{3,\!2,\!3,\!2}\!=\!\langle\!e_{12}e_{14}e_{24}, e_{13}e_{14}e_{34}, e_{12}e_{14}e_{23}e_{34}, e_{12}e_{13}e_{24}e_{34}\!\rangle$ and $\dva{2,\!3,\!2,\!3}\!=\!\langle e_{14}e_{23}, e_{13}e_{24}, e_{12}e_{23}e_{24}, e_{13}e_{23}e_{34}\rangle$, with $\dva{3,\!2,\!3,\!2} \!\cong\! \dva{2,\!3,\!2,\!3}_1$. See the final chapter for the complete computation of $H_{\!\ast}\frak{nil}_4$.\hfill$\lozenge$
\end{Exp}

\begin{Lmm}\label{ObratniKompleks} $\dva{w_1,\ldots,w_n}\cong \dva{n\!+\!1\!-\!w_n,\ldots,n\!+\!1\!-\!w_1}.$
\end{Lmm}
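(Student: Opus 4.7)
The isomorphism will be induced by a single symmetry of $\mathfrak{nil}_n$, namely reflection along the anti-diagonal. Define $\psi:\mathfrak{nil}_n\to\mathfrak{nil}_n$ on generators by $\psi(e_{ij})=-e_{n+1-j,\,n+1-i}$, extended linearly. The first task is to verify that $\psi$ is a \emph{Lie-algebra} automorphism: a direct case-check on the formula $[e_{ab},e_{cd}]=\delta_{bc}e_{ad}-\delta_{ad}e_{cb}$, splitting into the cases $b=c$ and $a=d$, yields $\psi([x,y])=[\psi(x),\psi(y)]$ on all generators.

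Once $\psi$ is established as a Lie-algebra automorphism, the induced map $\Lambda^{\ast}\psi$ on the Chevalley--Eilenberg complex $\Lambda^{\ast}\mathfrak{nil}_n$ is automatically a chain automorphism, since the boundary $\partial$ is built from the bracket and wedge alone. On a $k$-wedge it acts by
\[\Lambda^{\ast}\psi(e_{a_1b_1}\ldots e_{a_kb_k}) \;=\; (-1)^k\, e_{n+1-b_1,\,n+1-a_1}\ldots e_{n+1-b_k,\,n+1-a_k},\]
which after sorting the factors back into canonical order becomes a signed basis element.

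It remains to check that $\Lambda^{\ast}\psi$ restricts to an isomorphism of the claimed summands. For $e_M$ of weight $\widetilde w$, the image is indexed by $M'=\{(n+1-b,\,n+1-a)\,:\,(a,b)\in M\}$; counting left/right appearances of each $i$ in $e_{M'}$ immediately gives $\widetilde w'_i=-\widetilde w_{n+1-i}$, and hence $w'_i = i-\widetilde w'_i = n+1-w_{n+1-i}$. This is precisely the target weight vector $(n+1-w_n,\ldots,n+1-w_1)$, so $\Lambda^{\ast}\psi$ sends $\dva{w_1,\ldots,w_n}$ onto $\dva{n+1-w_n,\ldots,n+1-w_1}$; being a bijection on generators it is an isomorphism of chain complexes.

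The only genuinely delicate point is the sign correction in the definition of $\psi$. Without the minus sign, the geometrically natural reflection $e_{ij}\mapsto e_{n+1-j,\,n+1-i}$ corresponds to $A\mapsto JA^{T}\!J$, with $J$ the anti-diagonal permutation matrix; because transposition reverses bracket order, this map is only an \emph{anti}-homomorphism of Lie algebras. The extra sign in $\psi$ absorbs exactly this anomaly and promotes the reflection to an honest automorphism. Beyond this observation, the proof is formal bookkeeping with wedges and weights, requiring no Morse-theoretic input.
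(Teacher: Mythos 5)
Your proof is correct and follows essentially the same route as the paper: both use the anti-diagonal reflection $e_{ab}\mapsto e_{n+1-b,\,n+1-a}$, which is an anti-automorphism of the Lie algebra, corrected by a sign so that it becomes a chain isomorphism, together with the same weight computation $w'_i=n+1-w_{n+1-i}$. The only (harmless) difference is bookkeeping: you place the minus sign on each generator, making $\psi$ an honest automorphism and invoking functoriality of the Chevalley complex, whereas the paper attaches a global sign $(-1)^{k+1}$ to $k$-wedges and verifies $\partial\tau=\tau\partial$ by direct computation; the two maps differ by an overall sign in each degree.
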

\begin{proof}
Define $\tau(e_{ab})\!=\!e_{n+1-b,n+1-a}$ and $\tau(\wedge_{i=1}^ke_{a_ib_i})= (-1)^{k+1}\!\wedge_{i=1}^k\!\tau(e_{a_ib_i})$. Now $e_M\!\in\!\dva{w_1,\ldots,w_n}$ implies $\tau(e_M)\!\in\!\dva{n\!+\!1\!-\!w_n,\ldots,n\!+\!1\!-\!w_1}$, because\vspace{-0mm}
$$\begin{array}{r@{\hspace{3pt}}l}
w_{n+1-i}(\tau(e_{a_1b_1}\ldots e_{a_kb_k})) &=w_{n+1-i}(e_{n+1-b_1,n+1-a_1\!}\ldots e_{n+1-b_k,n+1-a_k})\\
                                                                &=n\!+\!1-i-|\{j; a_j\!=\!i\}|+|\{j; b_j\!=\!i\}|\\
                                                                &=n\!+\!1-(i\!-\!|\{j; b_j\!=\!i\}|\!+\!|\{j; a_j\!=\!i\}|)\\
                                                                &=n\!+\!1-w_i(e_{a_1b_1}\ldots e_{a_kb_k}).
\end{array}\vspace{-0mm}$$
From $[\tau(e_{ab}),\tau(e_{cd})]\!=\!-\tau([e_{ab},e_{cd}])$, we obtain\vspace{-0mm}
$$\begin{array}{r@{\hspace{3pt}}l}
\partial\tau(e_{a_1b_1}\ldots e_{a_kb_k})
&=(-1)^{k+1}\sum_{i<j}(-1)^{i+j}[\tau e_{a_ib_i}\!,\tau e_{a_jb_j}\!]\ldots \widehat{\tau}(e_{a_ib_i})\ldots \widehat{\tau}(e_{a_jb_j})\ldots\\
&=(-1)^{k}\sum_{i<j}(-1)^{i+j}\tau[e_{a_ib_i},e_{a_jb_j}]\ldots \widehat{\tau}(e_{a_ib_i})\ldots \widehat{\tau}(e_{a_jb_j})\ldots\\
&=\tau\partial(e_{a_1b_1}\ldots e_{a_kb_k}),
\end{array}\vspace{-0mm}$$
so $\tau$ is a chain map. Since $\tau\circ\tau\!=\!\mathrm{id}$, our $\tau$ is an isomorphism of chain complexes.
\end{proof}

\begin{Lmm}\label{ZamaknjenKompleks} $\dva{w_1,w_2,\ldots,w_n}\cong \dva{w_2,\ldots,w_n,w_1}_{2w_1\!-n-1}.$
\end{Lmm}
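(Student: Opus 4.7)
The plan is to construct an explicit chain isomorphism $\psi$ by relabeling indices via $\sigma(a,b) := (a-1,b-1)$ on all edges with $a\geq 2$, and replacing $M_1 = \{b : (1,b)\in M\}$ by its complement $M_1^c := \{2,\ldots,n\}\setminus M_1$, identifying each new edge $(1,c)$ (for $c\in M_1^c$) with an edge $(c-1,n)$. Since no pair $(x,1)$ can lie in $M$, every basis element $e_M\in\dva{w_1,\ldots,w_n}$ has exactly $k := w_1-1$ edges incident to vertex $1$. Writing $M_1=\{b_1<\cdots<b_k\}$, $M_1^c=\{c_1<\cdots<c_{n-1-k}\}$, and $M'' := M\setminus(\{1\}\times M_1)$, define
\[
\psi(e_M) \,=\, \epsilon_M\cdot e_{c_1-1,n}\,e_{c_2-1,n}\cdots e_{c_{n-1-k}-1,n}\,\sigma(e_{M''}),
\]
where $\epsilon_M\in\{\pm1\}$ is a sign to be pinned down.

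A direct vertex-by-vertex computation of $\widetilde w$ for the image gives $w_n(\psi(e_M)) = n-(n-1-k) = w_1$ and $w_i(\psi(e_M)) = w_{i+1}(e_M)$ for $i<n$, so $\psi(e_M)\in\dva{w_2,\ldots,w_n,w_1}$. Counting edges, $|\psi(e_M)| = (n-1-k)+(|M|-k) = |M|-(2w_1-n-1)$, so $\psi$ lands in degree $|M|$ of $\dva{w_2,\ldots,w_n,w_1}_{2w_1-n-1}$, as required.

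The heart of the proof is the chain-map check $\partial\psi=\psi\partial$. Boundary summands of $e_M$ come from brackets $[e_{a_ib_i},e_{a_jb_j}]$, which are nonzero in $\mathfrak{nil}_n$ only when $b_i=a_j$ or $a_i=b_j$. Partition pairs into (A) both of type 2 ($a_i,a_j\geq 2$); (B) both of type 1 ($a_i=a_j=1$), automatically zero; and (C) one of each. Case (A) matches image brackets among type-2 edges directly through the identity $[\sigma e_{ab},\sigma e_{cd}]=\sigma[e_{ab},e_{cd}]$. In case (C), a nonzero domain bracket $[e_{1,\alpha},e_{\alpha,\gamma}]=e_{1,\gamma}$ (with $\alpha\in M_1$ and necessarily $\gamma\in M_1^c$ to avoid a repeated edge) has a unique counterpart on the image side: the bracket $[e_{\gamma-1,n},e_{\alpha-1,\gamma-1}]=-e_{\alpha-1,n}$, since in both pictures the effect is to swap $\alpha$ and $\gamma$ between $M_1$ and $M_1^c$.

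The main obstacle will be the sign bookkeeping. The Koszul sign on the domain depends on the positions of $(1,\alpha)$ and $(\alpha,\gamma)$ in the wedge, while on the image side it depends on the positions of $(\gamma-1,n)$ and $(\alpha-1,\gamma-1)$, and there is an extra $-1$ from the image bracket itself. I would choose $\epsilon_M = (-1)^{f(M)}$ for a parity $f(M)$ depending on the ranks of the elements of $M_1$ inside $\{2,\ldots,n\}$, and verify that all parity flips in case (C) cancel. Bijectivity is then immediate from the symmetric reverse construction (apply $\sigma^{-1}$ to type-2 edges plus re-complement), giving a two-sided inverse $\dva{w_2,\ldots,w_n,w_1}_{2w_1-n-1}\to\dva{w_1,\ldots,w_n}$.
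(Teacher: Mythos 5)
Your construction is the same as the paper's: the paper's map $\varphi$ sends $\wedge_{i=1}^{n-1}e_{\{i\}\times M_i}$ to $(-1)^{\Sigma M_1}e_{M_1^C\times\{n+1\}}\wedge_{i=2}^{n-1}e_{\{i\}\times M_i}$ (keeping indices $2,\ldots,n$ and adjoining a new top vertex $n+1$ instead of relabelling everything down by one, which is only a cosmetic difference from your $\sigma$). Your weight computation, the degree count $|M|-(2w_1-n-1)$, and the three-way case split for $\partial\psi=\psi\partial$ — with the matching of the domain bracket $[e_{1\alpha},e_{\alpha\gamma}]=e_{1\gamma}$ against the image bracket $[e_{\gamma-1,n},e_{\alpha-1,\gamma-1}]=-e_{\alpha-1,n}$ — all agree with the paper.

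However, the proof is not complete: you never pin down $\epsilon_M$ and you defer the verification that the Koszul signs in case (C) cancel, and that verification \emph{is} the content of the lemma (everything else is routine bookkeeping). Moreover, your Ansatz that $\epsilon_M$ depends only on the ranks of the elements of $M_1$ cannot work when $n$ is even. The paper's computation shows that with the sign $(-1)^{\Sigma M_1}$ (which is of your proposed form, up to the constant $(-1)^{|M_1|}$) one gets $\varphi\partial=(-1)^{n-1}\partial\varphi$, not $\varphi\partial=\partial\varphi$; the discrepancy sits in the terms $e_{M_1^C\times\{n+1\}}\,\partial e_N$ coming from boundary operators that do not touch $M_1$ at all. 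Since such a term changes $|M|$ but not $M_1$, no correction depending only on $M_1$-data can repair it: one is forced to also multiply by a degree-parity sign, as in the paper's $\overline{\varphi}(e_M)=(-1)^{(n-1)|M|}\varphi(e_M)$. So you need to (i) allow $\epsilon_M$ to depend on $|M|$ as well, and (ii) actually carry out the position-counting argument (the paper's identity $(-1)^{y-x}\varepsilon_{xy}=(-1)^n\varepsilon'_{xy}$, obtained by comparing the position of $x$ in $M_1$ and of $(x,y)$ in $N$ with the positions of $y$ in $M_1^C$ and of $x$ in $M_1^C\setminus y\cup x$) to see that all the case-(C) signs agree simultaneously.
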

\begin{proof}
Define a linear map $\varphi\!: \dva{w_1,w_2,\ldots,w_n}\longrightarrow \dva{w_2,\ldots,w_n,w_1}_{2w_1\!-n-1}$ by
$$\varphi\big(\!\wedge_{i=1}^{n\!-\!1}\! e_{\{i\}\times M_i}\big)= (-1)^{\Sigma M_{\!1}}e_{M_1^C\!\times\{n+1\}} \wedge_{i=2}^{n\!-\!1}e_{\{i\}\times M_i},$$
where $M_1^C\!=\!\{2,\ldots,n\}\!\setminus\!M_1$; it is convenient to have indices in the codomain go from $2$ to $n\!+\!1$ instead of from $1$ to $n$. There holds\vspace{-0mm}
$$\begin{array}{@{\hspace{1pt}}r@{\hspace{3pt}}l}
w_i(\varphi(e_M)) &=i\!-\!|\{x;e_{x,i+\!1}\!\in\!\varphi(e_M)\}|\!+\!|\{y;e_{i+\!1,y}\!\in\!\varphi(e_M)\}|\\
&=i\!-\!\big(|\{x;e_{x,i+\!1}\!\!\in\!e_M\}|\!+\!\icases{-\!1}{\!e_{1\!,i\!+\!1}\in e_{\!M}}{~0}{\!e_{1\!,i\!+\!1}\notin e_{\!M}}{\scriptstyle}{-2pt}{\big}\!\big) \!+\!\big(|\{y;e_{i+\!1,y}\!\!\in\!e_M\}|\!+\!\icases{1}{\!e_{1\!,i\!+\!1}\notin e_{\!M}}{0}{\!e_{1\!,i\!+\!1}\in e_{\!M}}{\scriptstyle}{-2pt}{\big}\!\big)\\
&=i\!+\!1\!-\!|\{x;e_{x,i+\!1}\!\!\in\!e_M\}|\!+\!|\{y;e_{i+\!1,y}\!\!\in\!e_M\}| =w_{i+1}(e_M)\text{ \;for }i\!<\!n \text{ \;\;and }\\
w_n(\varphi(e_M))&=n\!-\!|\{x;e_{x,n+\!1}\!\in\!\varphi(e_M)\}|\!+\!|\{y;e_{n+\!1,y}\!\in\!\varphi(e_M)\}|\\
&=n\!-\!(n\!-\!1\!-\!|M_1|)\!+\!0 =1\!-\!0\!+\!|M_1|=w_1(e_M).
\end{array}\vspace{-0mm}$$
\noindent Length difference of $e_{\!M}$ and $\varphi(e_{\!M}\!)$ is $(n\!-\!1\!-\!|M_1|)\!-\!|M_1| \!=\!n\!-\!1\!-\!2(w_1\!-\!1)\!=\!n\!+\!1\!-\!2w_1$. Thus $\varphi$ is a well-defined bijection. Denoting $M\!\setminus\!x\!\cup\!y := (M\!\!\setminus\!\!\{x\})\!\cup\!\{y\}$, we have
\[\begin{array}{l} 
\varphi\partial(e_{\{\!1\!\}\times M_1} e_{\!N})
=\varphi\big(\!\sum_{x\in M_{\!1}\!,\,y\in N_{\!x}\!\setminus\!M_{\!1}} \varepsilon_{\!xy} e_{\{\!1\!\}\times(\!M_{\!1}\!\setminus x\cup y)} e_{\!N\!\setminus\!\{\!(x\!,y)\!\}} +(-1)^{|M_{\!1}\!|}e_{\{1\}\!\times\!M_{\!1}}\partial e_{\!N}\!\big)\\
=\hspace{-9pt}\sum\limits_{x\in M_{\!1}\!,\,y\in N_{\!x}\!\setminus\!M_{\!1}}\hspace{-9pt} (-1)^{y-x+\Sigma M_{\!1}}\varepsilon_{\!xy} e_{\!(M_{\!1}\!\setminus x\cup y)^C\!\times\{n_{\!}+\!1\}} e_{\!N\!\setminus\!\{\!(x\!,y)\!\}} +(\!-\!1\!)^{|M_{\!1}\!|+\Sigma M_{\!1}}e_{\!M_{\!1}^C\!\times \{n_{\!}+\!1\}}\partial e_{\!N},\\[12pt]
\partial\varphi(e_{\{\!1\!\}\times M_{\!1}} e_{\!N}) = \partial\big((-1)^{\Sigma M_{\!1}}e_{\!M_{\!1}^C\!\times\{n_{\!}+\!1\}}e_{\!N}\big)\\
=\hspace{-9pt}\sum\limits_{y\in N_{\!x\!}\cap_{\!}M_{\!1}^C\!\!,\,x\notin M_{\!1}^C}\hspace{-9pt} (-1)^{\Sigma M_{\!1}}(\!-_{\!}\varepsilon'_{\!xy}) e_{\!(\!M_{\!1}^C\!\setminus y\cup x)\times\{n_{\!}+\!1\}} e_{\!N\!\setminus\!\{\!(x\!,y)\!\}} + (-1)^{\Sigma M_{\!1}+|M_{\!1}^C\!|}e_{\!M_{\!1}^C\!\times\{n_{\!}+\!1\}}\partial e_{\!N}\\
=\hspace{-11pt}\sum\limits_{x\in M_{\!1}\!,\,y\in N_{\!x}\!\setminus\!M_{\!1}}\hspace{-11pt} (-1)^{1+\Sigma M_{\!1}}\varepsilon'_{\!xy} e_{\!(\!M_{\!1}\!\setminus x\cup y)^C\!\times\{n_{\!}+\!1\}} e_{\!N\!\setminus\!\{\!(x\!,y)\!\}} + (-1)^{n_{\!}-\!1+|M_{\!1}\!|+\Sigma M_{\!1}}e_{\!M_{\!1}^C\!\times\{n_{\!}+\!1\}}\partial e_{\!N\!}.\\
\end{array}\]
for $\varepsilon_{\!xy},\varepsilon'_{\!xy}\!\in\!\{1,-\!1\}$. Since $[e_{y,n_{\!}+_{\!}1},e_{x,y}]\!=\!-e_{x,n+1}$, there is a minus before $\varepsilon'_{\!xy}$. We must show that $(-1)^{y-x}\varepsilon_{\!xy}=(-1)^{n}\varepsilon'_{\!xy}$: if $\alpha\!=\!($position of $x$ in $M_1)$, $\beta\!=\!($position of $(x,y)$ in $N)$, $\gamma\!=\!($position of $y$ in $M_1\!\setminus\!x\!\cup\!y)$, then $y$ in $M_1^C$ has position $y\!-\!\gamma\!-\!1$ and $x$ in $M_1^C\!\setminus\!y\!\cup\!x$ has position $x\!-\!\alpha$, so $(-1)^{y-x}\varepsilon_{\!xy} \!=\! (-1)^{y-x+\alpha+(|M_{\!1}|+\beta)+(\gamma-1)} \!=\! (-1)^{n+(y-\gamma-1)+(n-1-|M_{\!1}|+\beta)+(x-\alpha-1)} \!=\! (-1)^n\varepsilon'_{\!xy}$. Therefore \vspace{-4pt} $\varphi\partial\!=\!(-1)^{n\!-\!1}\partial\varphi$, hence $\overline{\varphi}(e_{\!M}\!):= \icases{\varphi(e_{\!M}\!)(\!-\!1\!)^{n\!-\!1}}{\text{if }|M|\in2\N}{\varphi(e_{\!M}\!)}{\text{if }|M|\notin2\N}{\scriptstyle}{-0pt}{\Big}$ is an isomorphism of chain complexes.
\end{proof}

\begin{Lmm}\label{IndukcijaKompleks} $\dva{w_1,\ldots,w_{k\!-\!1},n,w_{k\!+\!1},\ldots,w_n}\cong\dva{w_1,\ldots,w_{k\!-\!1},w_{k\!+\!1},\ldots,w_n}_{n-k}$ and $\dva{w_1,\ldots,w_{k\!-\!1},1,w_{k\!+\!1},\ldots,w_n}\cong\dva{w_1\!-\!1,\ldots,w_{k\!-\!1}\!-\!1,w_{k\!+\!1}\!-\!1,\ldots,w_n\!-\!1}_{k-1}$.
\end{Lmm}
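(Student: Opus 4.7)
The plan is to reduce both statements to the two ``boundary'' identities already recorded in the paper, namely $\dva{w_1,\ldots,w_{n-1},n}=\dva{w_1,\ldots,w_{n-1}}$ and $\dva{1,w_2,\ldots,w_n}=\dva{w_2-1,\ldots,w_n-1}$, by using the cyclic-shift isomorphism of Lemma \ref{ZamaknjenKompleks} to rotate the distinguished entry ($n$ or $1$) into the appropriate boundary position, and then to rotate everything back into the order claimed on the right-hand side.

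For the first isomorphism I would iterate Lemma \ref{ZamaknjenKompleks} exactly $k$ times to turn $(w_1,\ldots,w_{k-1},n,w_{k+1},\ldots,w_n)$ into $(w_{k+1},\ldots,w_n,w_1,\ldots,w_{k-1},n)$, with an accumulated dimension shift of $\sum_{i=1}^{k-1}(2w_i-n-1)+(2n-n-1)=2(w_1+\cdots+w_{k-1}+n)-k(n+1)$. The identity $\dva{\ldots,n}=\dva{\ldots}$ then deletes the trailing $n$, leaving a length-$(n-1)$ tuple. Now, because the ambient length has dropped to $n-1$, each further application of Lemma \ref{ZamaknjenKompleks} contributes $2u_1-n$ instead of $2u_1-n-1$; performing $n-k$ more cyclic shifts reorders the remaining entries into $(w_1,\ldots,w_{k-1},w_{k+1},\ldots,w_n)$ and adds $2(w_{k+1}+\cdots+w_n)-(n-k)n$ to the shift. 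The accumulated shift collapses to $2(w_1+\cdots+w_n)-k(n+1)-(n-k)n=n(n+1)-k(n+1)-(n-k)n=n-k$, as claimed.

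The second isomorphism is handled symmetrically. Iterate Lemma \ref{ZamaknjenKompleks} only $k-1$ times to bring the entry $1$ from position $k$ to position $1$, producing $\dva{1,w_{k+1},\ldots,w_n,w_1,\ldots,w_{k-1}}$ with shift $2(w_1+\cdots+w_{k-1})-(k-1)(n+1)$. Apply the identity $\dva{1,u_2,\ldots,u_n}=\dva{u_2-1,\ldots,u_n-1}$ to shrink the length to $n-1$, and then perform $n-k$ more cyclic shifts to obtain $(w_1-1,\ldots,w_{k-1}-1,w_{k+1}-1,\ldots,w_n-1)$. The key subtlety is that in this final phase each entry has been decreased by~$1$, so each individual shift contributes $2(u-1)-n$ rather than $2u-n$, producing an extra $-2(n-k)$ in the accumulation. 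Adding everything up and using $w_k=1$ together with $\sum_i w_i=\binom{n+1}{2}$, one checks that the total shift is exactly $k-1$.

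The main obstacle is purely bookkeeping of dimension shifts, because the ambient length of the complex changes mid-argument (from $n$ to $n-1$) and each cyclic shift contributes an amount depending on the current leading entry and the current length. No fresh chain-level construction should be needed: Lemma \ref{ZamaknjenKompleks} together with the two boundary identities supplies all the required isomorphisms, and the only real work lies in verifying that the shifts telescope correctly to $n-k$ and $k-1$ respectively.
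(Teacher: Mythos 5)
Your proof is correct and takes essentially the same route as the paper's: both rotate the distinguished entry into boundary position with iterated applications of Lemma \ref{ZamaknjenKompleks}, apply the identities $\dva{w_1,\ldots,w_{n-1},n}=\dva{w_1,\ldots,w_{n-1}}$ and $\dva{1,w_2,\ldots,w_n}=\dva{w_2-1,\ldots,w_n-1}$, rotate back in the length-$(n-1)$ setting, and check that the shifts telescope to $n-k$ and $k-1$. Your bookkeeping (the change from $2u_1-n-1$ to $2u_1-n$ once the length drops, and the extra $-2(n-k)$ from the decremented entries) is accurate.
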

\begin{proof}
We can identify $\dva{w_1,\ldots,w_{n\!-\!1},n}$ with $\dva{w_1,\ldots,w_{n\!-\!1}}$. By Lemma \ref{ZamaknjenKompleks},
\begin{align*}
\dva{w_{1\!},\ldots,\!w_{k\!-\!1},\!n,\!w_{k\!+\!1},\ldots,\!w_n\!}
&\!\cong\!\dva{w_{k\!+\!1},\ldots,\!w_n,\!w_1,\ldots,\!w_{k\!-\!1},\!n}_{-2\!\sum_{i=k\!+\!1}^n\!\!w_i+(n_{\!}-_{\!}k)(n_{\!}+\!1_{\!})}\\
&\!\cong\!\dva{w_{k\!+\!1},\ldots,\!w_n,\!w_1,\ldots,\!w_{k\!-\!1}}_{-2\!\sum_{i=k\!+\!1}^n\!\!w_i+(n_{\!}-_{\!}k)(n_{\!}+\!1_{\!})}\\
&\!\cong\!\dva{w_1,\ldots,\!w_{k\!-\!1},\!n,\!w_{k\!+\!1},\ldots,\!w_n}_{(n_{\!}-_{\!}k)\cdot1}.
\end{align*}
We can identify $\dva{1,w_2,\ldots,w_n\!}$ with $\dva{w_2\!\!-\!\!1,\ldots,w_n\!\!-\!\!1}$. By Lemma~\ref{ZamaknjenKompleks},
\begin{align*}
\dva{w_{1\!},\ldots,\!w_{k\!-\!1},\!1,\!w_{k\!+\!1},\ldots,\!w_n\!}
&\!\cong\!\dva{1,\!w_{k\!+\!1},\ldots,\!w_n,\!w_1,\ldots,\!w_{k\!-\!1}}_{2\!\sum_{i=1}^{k\!-\!1}\!\!w_i-(k\!-\!1_{\!})(n_{\!}+\!1_{\!})}\\
&\!\cong\!\dva{w_{k\!+\!1}\!\!-\!\!1,\ldots,\!w_n\!\!-\!\!1,\!w_1\!\!-\!\!1,\ldots,\!w_{k\!-\!1}\!\!-\!\!1}_{2\!\sum_{i=1}^{k\!-\!1}\!\!w_i-(k\!-\!1_{\!})(n_{\!}+\!1_{\!})}\\
&\!\cong\!\dva{w_1\!\!-\!\!1,\ldots,\!w_{k\!-\!1}\!\!-\!\!1,\!w_{k\!+\!1}\!\!-\!\!1,\ldots,\!w_n\!\!-\!\!1}_{2(k\!-\!1_{\!})-(k\!-\!1_{\!})}.
\end{align*}
This establishes the first and second part of the claim.
\end{proof}

\par If all elements in $w\!=\!(w_1,\ldots,w_n)$ are distinct, then by applying Lemma \ref{IndukcijaKompleks} $n$ times, we see that $\dva{w}$ has only one generator, namely \vspace{-1mm}
\begin{equation}\label{eq.2.1}
\textstyle{e_{\!\pi}:=\bigwedge_{i<j,w_i>w_j}\!e_{ij}},\vspace{-1mm}
\end{equation}
which is the wedge of inversions of permutation $w\!=\!\pi$ of $(1,\ldots,\!n)$. Indeed, $w_k(e_{\!\pi}\!) = k\!-\!|\{i;e_{ik}\!\in\!e_{\!\pi}\}|\!+\!|\{j;e_{kj}\!\in\!e_{\!\pi}\}| = k\!-\!|\{i;i\!<\!k, w_i\!>\!w_k\}|\!+\!|\{j;k\!<\!j, w_k\!>\!w_j\}| = 1\!+\!|\{i;i\!<\!k, w_i\!<\!w_k\}|\!+\!|\{j;k\!<\!j, w_k\!>\!w_j\}| = 1\!+\!|\{r;r\!\neq\!k,w_r\!<\!w_k\}| = w_k$.
\par Let $\mathcal{F}_n\!=\!\{(w_1,\ldots,w_n)\!\in\!\mathcal{S}_n;\, w_i\!\neq\!w_j\text{ for }i\!\neq\!j\}\!=\!\{\pi(1,\ldots,n);\pi\!\in\!S_n\}$. In Lemma \ref{BrezProstegaDela}, we show that for $w\!\notin\!\mathcal{F}_n$ the homology of $\dva{w}$ has only torsion. Therefore the free part is $FH_\ast(\mathfrak{nil}_n)= \bigoplus_{w\in\mathcal{F}_{\!n}} \dva{w}$. Thus we calculate what was already known to Kostant \cite{citearticleKostantLAHGBWT,citearticleHanlonLM}, 
who used the Laplacian method (the fact that $\dim H_k(C_\ast,\partial_\ast)=\dim\Ker(\partial_{k_{\!}+_{\!}1}\partial_{k_{\!}+_{\!}1}^t\!\!+\!\partial_k^t\!\partial_k)$ over $\Q$) to obtain part (a) of the following result:

\begin{Thm}\label{ProstiDel} \textbf{(a)} $F H_k(\mathfrak{nil}_n)\cong \Z^{T(n,k)}$\!\!, where $T(n,k)$ is the Mahonian number.\\
\textbf{(b)} $H^{\ast\!}(\frak{nil}_n;\Q)\cong \langle x_{\!\pi};\,\pi\!\in\!S_n\rangle \leq\Lambda_\Q[x_{ij};1\!\leq\!i\!<\!j\!\leq\!n]$, where $x_\pi\!=\!\bigwedge_{i<j,\pi_i>\pi_j}\!x_{ij}$.
\end{Thm}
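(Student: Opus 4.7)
My plan is as follows. Part (a) would follow from the forthcoming Lemma \ref{BrezProstegaDela}, which asserts that $H_*(\dva{w})$ is pure torsion whenever $w \notin \mathcal{F}_n$. Combined with the decomposition $\Lambda^*\mathfrak{nil}_n = \bigoplus_{w \in \mathcal{S}_n} \dva{w}$, this reduces the free part to $\bigoplus_{\pi \in \mathcal{F}_n} \dva{\pi}$. Iterating Lemma \ref{IndukcijaKompleks}, as in the paragraph preceding the theorem, shows that each $\dva{\pi}$ is a free $\Z$-module of rank one concentrated in homological degree $\mathrm{inv}(\pi) := |\{(i,j) : i<j,\ \pi_i > \pi_j\}|$, with the single generator $e_\pi$ of \eqref{eq.2.1}. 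Grouping permutations by inversion count then yields $FH_k(\mathfrak{nil}_n) \cong \Z^{T(n,k)}$.

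For part (b), I would pass to the dual Chevalley--Eilenberg cochain complex $C^* = \Lambda_\Q[x_{ij};\, 1 \le i < j \le n]$, in which the cup product is simply the wedge. The weight grading dualizes cleanly: the basic cochain $x_S := \bigwedge_{(a,b) \in S} x_{ab}$, for $S \subseteq \{(i,j) : i < j\}$, sits in the weight component determined by $S$. Working over $\Q$, Lemma \ref{BrezProstegaDela} makes every weight component with $w \notin \mathcal{F}_n$ acyclic, while for $w = \pi \in \mathcal{F}_n$ the component is one-dimensional and spanned by $x_\pi$, which is automatically a cocycle. Hence $\{[x_\pi] : \pi \in S_n\}$ is a $\Q$-basis of $H^*(\mathfrak{nil}_n;\Q)$, giving the linear identification with $\langle x_\pi;\, \pi \in S_n \rangle \subseteq \Lambda_\Q[x_{ij}]$.

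To pin down the algebra structure, I would evaluate the cup product $[x_\pi][x_\sigma]$ as the class of $x_\pi \wedge x_\sigma$ in $C^*$. If $I(\pi) \cap I(\sigma) \neq \emptyset$, the wedge vanishes outright. Otherwise $x_\pi \wedge x_\sigma = \pm x_S$ with $S = I(\pi) \sqcup I(\sigma)$; if $S = I(\tau)$ for some $\tau \in S_n$, the product is $\pm[x_\tau]$, and if $S$ is not the inversion set of any permutation, then $x_S$ is a cocycle lying in a $\Q$-acyclic weight component, hence a coboundary, so represents zero. Thus the wedge in $\Lambda_\Q[x_{ij}]$, followed by projection onto the direct summand $\langle x_\pi;\,\pi \in S_n\rangle$ along the sum of non-$\mathcal{F}_n$ weight components, recovers the cup product and delivers the claimed algebra isomorphism.

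The serious obstacle is of course Lemma \ref{BrezProstegaDela}: everything in the theorem hinges on identifying which weight components can carry free (equivalently, rational) homology. Once that classification is established, both parts reduce to combinatorial bookkeeping with inversion sets, degrees, and signs; the only subtlety I would double-check is the sign $\pm$ in $x_\pi \wedge x_\sigma = \pm x_{I(\pi) \sqcup I(\sigma)}$, determined by the shuffle needed to reorder the product lexicographically.
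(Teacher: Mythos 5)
Your argument is correct and essentially the paper's own: part (a) is exactly the paper's reduction to Lemma \ref{BrezProstegaDela} combined with (\ref{eq.2.1}), and part (b) matches the paper's computation $e_{\pi}^\ast\smile e_{\pi'}^\ast=(e_{\pi}\wedge e_{\pi'})^\ast$, since the shuffle formula for the cup product says precisely that the dual basis multiplies as in $\Lambda_\Q[x_{ij}]$. If anything, you are more explicit than the paper on the one genuinely needed point in (b) — that when $I(\pi)\sqcup I(\sigma)$ is not an inversion set the cocycle $x_{I(\pi)\sqcup I(\sigma)}$ lies in a rationally acyclic weight component and hence represents zero, so the product structure is ``wedge followed by projection'' — which the paper leaves implicit.
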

In words, the cohomology algebra over $\Q$ is isomorphic to the subalgebra of the polynomial exterior algebra, spanned by the inversions of all permutations.
\begin{proof}
\textbf{(a)} By definition (OEIS A008302), $T(n,k)$ is the number of permutations of $[n]$ which have $k$ inversions, so the result follows from (\ref{eq.2.1}) and Lemma \ref{BrezProstegaDela}.\\ 
\textbf{(b)} For any $\alpha\!\in\!H^i(\frak{g})$ and $\beta\!\in\!H^j(\frak{g})$, the cup product is given by \vspace{-3pt}
$$(\alpha\!\smile\!\beta)(x_1\!\cdots\!x_{i+j})= \!\!\sum_{\pi\in S_{i+j},\,\pi_1<\ldots<\pi_i, \pi_{i+1}<\ldots<\pi_{i+j}}\!\!\! \mathrm{sgn}\pi\,\alpha(x_{\pi_1}\!\!\cdots\!x_{\pi_i})\, \beta(x_{\pi_{i+1}}\!\!\cdots\!x_{\pi_{i+j}}).\vspace{-3pt}$$
In our case, $H^{\ast\!}(\frak{nil}_n;\Q)$ is spanned by the duals $\{x_\pi\!:=\!e_\pi^\ast;\, \pi\!\in\!S_n\}$. Furthermore, $(e_{\pi}^\ast\!\smile\!e_{\pi'}^\ast)(e_{\!M})= \icases{1}{\text{if }e_{\!M}=e_{\!\pi}e_{\!\pi'}}{0}{\text{if }e_{\!M}\neq e_{\!\pi}e_{\!\pi'}}{\scriptstyle}{-0pt}{\Big}$, hence $e_{\pi}^\ast\!\smile\!e_{\pi'}^\ast= (e_{\!\pi}\!\wedge\!e_{\!\pi'}\!)^\ast$.
\end{proof}\clearpage

\vspace{3mm}
\section{Filtrations}
Let $w\!=\!(2,w_2,\ldots,w_n)$, so every wedge in $\dva{w}$ contains exactly one $e_{1\ast}$. There is a natural filtration of $\dva{w}$ by subcomplexes: if $F^w_k$ is spanned by $\{e_{1i}e_M; i\!\geq\!k\}$, then $0\!=\!F^w_{n+1}\!\leq\! F^w_n\!\leq\!\ldots\!\leq\!F^w_2\!=\!\dva{w}$. The quotient $F^w_k\!/\!F^w_{k+1}$ has generators $\{[e_{1k} e_M];\, e_{1k}e_M\!\in\!F^w_k\}$ and boundary $\partial[e_{1k}e_{\!M}]\!=\!-[e_{1k}\,\partial e_{\!M}]$, therefore
\begin{equation}\label{eq.3.1}\begin{array}{r@{\:\cong\:}l}
F^w_k\!/\!F^w_{k+1} &\dva{1,w_2,\ldots,w_{k\!-\!1},w_{k_{\!}}\!+_{\!}\!1,w_{k_{\!}+_{\!}1},\ldots,w_n}_1\\
                    &\dva{w_{2\!}\!-\!\!1,\ldots,w_{k\!-\!1\!}\!-\!\!1,w_k,w_{k_{\!}+_{\!}1\!}\!-\!\!1,\ldots,w_n\!\!-\!\!1}_1. \\
\end{array}\end{equation}

\begin{Lmm}\label{TriDvojke} If $w_r\!=\!w_s\!=\!w_t\!\in\!\{2,n_{\!}\!-_{\!}\!1\}$ for distinct $r,s,t$, then $H_{\!\ast}\dva{w_1,\ldots,\!w_n}\cong0$.
\end{Lmm}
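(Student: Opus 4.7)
The plan is: reduce to the case of exactly three $2$'s with no $1$'s or $n$'s present, then collapse the filtration (\ref{eq.3.1}) down to a short exact sequence whose connecting map is a chain isomorphism, forcing $H_\ast\dva{w}=0$.

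By Lemma \ref{ObratniKompleks} I may assume the common value of $w_r,w_s,w_t$ is $2$ (the case $n-1$ is dual). The combinatorial lemma I will invoke repeatedly is: if a modified weight sequence $(v_1,\ldots,v_m)$ has $v_i=v_j=1$ with $i<j$, then $v_j=1$ makes column $j$ full while $v_i=1$ makes row $i$ empty, so the entry $(i,j)$ would lie in both, which is impossible; thus $\dva{v_1,\ldots,v_m}=0$. Combined with Lemma \ref{IndukcijaKompleks}: if some $w_\ell=n$, strip it off and induct on $n$ (base case $n\leq3$ is $H_\ast\dva{2,2,2}\!\simeq\!0$ from the Example); if some $w_\ell=1$, the three $2$'s become three $1$'s and the complex vanishes; if $w$ has four or more $2$'s, cyclically shift one to position $1$ (Lemma \ref{ZamaknjenKompleks}) and every quotient in (\ref{eq.3.1}) then carries at least two $1$'s, so each quotient is zero and $\dva{w}=0$.

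The remaining case is: exactly three $2$'s, with $w_\ell\in\{2,\ldots,n-1\}$ throughout. Cyclically shift so that $w_1=2$, and let $1<s<t$ denote the other two-positions. For $k\in\{2,\ldots,n\}\setminus\{s,t\}$, the quotient $F^w_k/F^w_{k+1}$ has $1$'s at positions $s$ and $t$ of its modified weight, so it vanishes and $F^w_k=F^w_{k+1}$. The filtration therefore collapses to the short exact sequence of chain complexes
\[
0\longrightarrow Q_t\longrightarrow\dva{w}\longrightarrow Q_s\longrightarrow 0,\qquad Q_k:=F^w_k/F^w_{k+1}.
\]

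To conclude, I will show that the connecting homomorphism is an isomorphism. Given a basis element $[e_{1s}e_{M'}]\in Q_s$, the weight constraint $w_t=2$ combined with $(1,t)\notin M'$ forces column $t$ of $M'$ to consist of all of $(2,t),\ldots,(t-1,t)$, so in particular $(s,t)\in M'$; analogously $w_s=2$ forces $\mathrm{row}\,s(M')=\{(s,t)\}$. Therefore
\[
\partial(e_{1s}e_{M'})=\pm e_{1s}\,\partial e_{M'}\;\pm\;e_{1t}\,e_{M'\setminus(s,t)},
\]
and the connecting map $\delta\colon Q_s\to Q_t$ sends $[e_{1s}e_{M'}]\mapsto\pm[e_{1t}e_{M'\setminus(s,t)}]$, a bijection on bases with explicit inverse $[e_{1t}e_{M''}]\mapsto\pm[e_{1s}\,e_{M''\cup\{(s,t)\}}]$. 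Since $\partial^2=0$ on $\dva{w}$ automatically makes $\delta$ a chain map, a basis bijection with $\pm1$ entries is a chain isomorphism over $\Z$, and the long exact sequence of the above SES gives $H_\ast\dva{w}=0$.

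The main obstacle will be the sign and combinatorial bookkeeping in the final step---verifying that the isolated off-filtration contribution to $\delta$ really has coefficient $\pm1$ (so that the signed basis bijection has determinant $\pm1$ and is invertible over $\Z$), and that the forced presence of $(s,t)$ in every $M'$ (together with its forced absence from every $M''$ on the $Q_t$ side) holds uniformly across all relevant wedges.
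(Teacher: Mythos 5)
Your proposal is correct and follows essentially the same route as the paper: collapse the filtration to the two surviving quotients $Q_s$ and $Q_t$ (all other quotients vanish because their weight sequences acquire two $1$'s, equivalently a $0$ after applying Lemma \ref{IndukcijaKompleks}), then observe that the connecting homomorphism is the signed basis bijection induced by the single bracket $[e_{1s},e_{st}]=e_{1t}$. The preliminary reductions you perform (entries equal to $1$ or $n$, four or more $2$'s) are harmless but unnecessary, since the main argument already covers those cases.
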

\begin{proof}
By Lemmas \ref{ZamaknjenKompleks} and \ref{ObratniKompleks}, we may assume that $r\!=\!1$ and $w_r\!=\!w_s\!=\!w_t\!=\!2$. For any $i\!\notin\!\{s,t\}$ there holds $F^w_i\!/\!F^w_{i+1} \cong \dva{w_{2\!}\!-\!\!1,\ldots,1,\ldots,w_i,\ldots,1,\ldots,w_n\!\!-\!\!1}_1 \cong \dva{w_{2\!}\!-\!\!2,\ldots,0,\ldots,w_i\!\!-\!\!1,\ldots,w_n\!\!-\!\!2}_t \cong0$, by (\ref{eq.3.1}) and Lemma \ref{IndukcijaKompleks}.
Thus we have $0\!=\!F^w_n\!=\!\ldots\!=\!F^w_{t+1}\!<\!F^w_t\!=\!\ldots\!=\!F^w_{s+1}\!<\!F^w_s\!=\!\ldots\!=\!\dva{w}$ and a long exact sequence of a pair $\ldots \!\to\! H_{k_{\!}+_{\!}1}\frac{\dva{w}}{F^w_t} \!\overset{\chi}{\to}\! H_kF^w_t\!\to\!H_k\dva{w} \!\to\! H_k\frac{\dva{w}}{F^w_t} \!\overset{\chi}{\to}\! H_{k_{\!}-\!1}F^w_t \!\to\! \ldots$. To prove $H_\ast\dva{w}\!\cong\!0$, it suffices to show that $\chi$ is an isomorphism, where $\chi(x\!+\!F^w_t)\!=\![\partial(x)]$.
\par Let $x\!\in\!\dva{w}\!/\!F^w_t\!=\!F^w_s\!/\!F^w_{s_{\!}+\!1}$, so $x\!=\!e_{1s}\cdots$. By $w_t\!=\!2$, $x\!=\!e_{1s}e_{\{2,\ldots,s,\ldots,t\!-\!1\}\!\times\!\{t\}}\cdots$. By $w_s\!=\!2$, $x\!=\![e_{1s} e_{\{2,\ldots,s\!-\!1\}\!\times\!\{s\}} e_{\{2,\ldots,s,\ldots,t\!-\!1\}\!\times\!\{t\}} e_M]$ with no indices $s$ and $t$ in $M$.
\par Let $y\!\in\!F^w_t\!=\!F^w_t\!/\!F^w_{t_{\!}+\!1}$, so $y\!=\!e_{1t}\cdots$. Since $w_s\!=\!2$, $y\!=\!e_{1t}e_{\{2,\ldots,s\!-\!1\}\!\times\!\{s\}}\cdots$. Since $w_t\!=\!2$, $y\!=\!e_{1t} e_{\{2,\ldots,s\!-\!1\}\!\times\!\{s\}} e_{\{2,\ldots,\hat s,\ldots,t\!-\!1\}\!\times\!\{t\}} e_M$ with no indices $s$ and $t$ in $M$.
\par Since $H_k\frac{\dva{w}}{F^w_t}\!=\!\frac{\mathrm{Ker}\partial}{\mathrm{Im}\partial}$, its elements are sent by $\partial$ to $F^w_t$, so in $x$ the only multiplication is $[e_{1s},e_{st}]\!=\!e_{1t}$. Thus $\chi$ sends $x\!\mapsto\!y$ and is bijective.
\end{proof}

\begin{Lmm}\label{ZaporedniDvojki} $\dva{\ldots,2,2,\ldots}\simeq0$ and $\dva{\ldots,n\!\!-\!\!1,n\!\!-\!\!1,\ldots}\simeq0$.
\end{Lmm}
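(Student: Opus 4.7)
By Lemma~\ref{ObratniKompleks}, the involution $w\!\mapsto\!(n\!+\!1\!-\!w_n,\ldots,n\!+\!1\!-\!w_1)$ takes a pair of consecutive $(n\!-\!1)$s into a pair of consecutive $2$s (and conversely), so it suffices to prove $\dva{\ldots,2,2,\ldots}\!\simeq\!0$. By Lemma~\ref{ZamaknjenKompleks} I would cyclically rotate the weight and assume $w_1\!=\!w_2\!=\!2$. From $w_1\!=\!2$ (hence $\widetilde{w}_1\!=\!-1$) every basis wedge contains exactly one factor $e_{1k}$ ($k\!\geq\!2$), and $w_2\!=\!2$ (hence $\widetilde{w}_2\!=\!0$) combined with the fact that every $e_{x,2}$ forces $x\!=\!1$ partitions the basis of $\dva{w}$ into \emph{Type A}, $e_{12}e_{2j}e_N$ with $j\!\geq\!3$, and \emph{Type B}, $e_{1i}e_N$ with $i\!\geq\!3$; in both cases $e_N$ uses only indices from $\{3,\ldots,n\}$.

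I would then take the Morse matching $\mathcal{M}$ that pairs each Type A wedge $e_{12}e_{2j}e_N$ with the Type B wedge $e_{1j}e_N$; this is a bijection A$\leftrightarrow$B, so no basis vector is left unmatched. The matching edge is an edge of $\Gamma_{C_\ast}$ with invertible weight, because $\partial(e_{12}e_{2j}e_N)$ contains the summand $(-1)^{1+2}[e_{12},e_{2j}]e_N\!=\!-e_{1j}e_N$. Once $\mathcal{M}$ is shown to be Morse, the AMT theorem yields $\dva{w}\!\simeq\!\mathring{C}_\ast\!=\!0$.

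The main obstacle is verifying acyclicity of $\Gamma^{\mathcal{M}}_{C_\ast}$, which I would handle by a short case-check showing that the non-matched part of $\partial$ preserves the type. For Type A: bracketing $e_{12}$ with any factor of $e_N$ vanishes (since $e_N$ avoids indices $1,2$); bracketing $e_{2j}$ with $e_{cd}\!\in\!e_N$ is nonzero only when $c\!=\!j$ and produces $\pm e_{12}e_{2d}e_{N\setminus(j,d)}$, still Type A; brackets within $e_N$ keep $e_{12}e_{2j}$ intact. For Type B: $[e_{1i},e_{cd}]$ either vanishes or gives $\pm e_{1d}e_{N\setminus(i,d)}$, still Type B, and brackets within $e_N$ fix $e_{1i}$. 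Hence the only cross-type edges in $\Gamma^{\mathcal{M}}_{C_\ast}$ are the reversed matched edges, all directed B$\to$A; every other edge stays within one type and strictly lowers chain-degree. A directed cycle would therefore have to re-enter Type B from Type A, which is impossible, and the finiteness of the complex rules out infinite paths. This finishes the Morse check, yielding $\dva{\ldots,2,2,\ldots}\!\simeq\!0$ and, via the initial reduction, also $\dva{\ldots,n\!-\!1,n\!-\!1,\ldots}\!\simeq\!0$.
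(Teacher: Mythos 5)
Your proposal is correct and is essentially the paper's proof: the same reduction via Lemmas~\ref{ObratniKompleks} and~\ref{ZamaknjenKompleks} to $w_1\!=\!w_2\!=\!2$, the same partition of the basis into $e_{12}e_{2j}e_N$ and $e_{1j}e_N$, and the same perfect Morse matching $e_{12}e_{2j}e_N\!\to\!e_{1j}e_N$ with no critical cells. The paper merely asserts that this is a Morse matching, whereas you spell out the acyclicity check; that verification is valid.
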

\begin{proof}
By Lemmas \ref{ZamaknjenKompleks} and \ref{ObratniKompleks}, it suffices to show that $\dva{w}\!:=\!\dva{2,2,w_3,\ldots,w_n}\simeq0$. Now $\dva{w}$ consists of $e_{1i} e_M$ with $i\!\geq\!3$ and $e_{12}e_{2i}e_M$, where 2 is not an index in $M$. Hence $\mathcal{M}\!=\!\{e_{12}e_{2i}e_{\!M}\!\to\! e_{1i}e_{\!M};\, e_{1i}e_{\!M}\!\in\!\dva{w}\}$ is a Morse matching with $\mathring{\mathcal{M}}\!=\!\emptyset$.
\end{proof}

\begin{Lmm}\label{ZamaknjenKompleksPriDva} Let $w\!=\!(2,w_2,w_3,\ldots,w_n)$ and $w'\!=\!(2,w_3,\ldots,w_n,w_2)$. Then $F_3^w\cong \dva{w'}_{2w_2\!-n-2}/F_n^{w'}$\!. If $H_\ast\dva{w_2,w_{3\!}\!-\!\!1,\ldots,w_{n\!}\!-\!\!1}\cong0$, then $H_\ast\dva{w} \cong H_\ast\dva{w'}_{2w_2-n-2}$.
\end{Lmm}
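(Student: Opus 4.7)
The plan is to prove (1) by constructing an explicit chain-map isomorphism, and then derive (2) from (1) via two standard short-exact-sequence arguments.

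\textbf{Part (1).} The two sides match naturally: $F_3^w$ consists of wedges with exactly one $e_{1i}$ for $i\!\geq\!3$ (no $e_{12}$), while $\dva{w'}/F_n^{w'}$ consists of wedges with exactly one $e_{1j}$ for $j\!\leq\!n\!-\!1$ (no $e_{1n}$). I would write each element of $F_3^w$ uniquely as $e_{1i}\wedge e_{\{2\}\times M_2}\wedge e_N$ with $M_2\!\subseteq\!\{3,\ldots,n\}$ of size $w_2\!-\!2$ and $N$ involving only indices $\geq\!3$, and define
\[
\Psi(e_{1i}\wedge e_{\{2\}\times M_2}\wedge e_N) \;=\; (-1)^{\Sigma M_2}\, e_{1,\,i-1}\wedge e_{\sigma(M_2^C)\times\{n\}}\wedge \rho(e_N),
\]
where $M_2^C\!=\!\{3,\ldots,n\}\!\setminus\!M_2$, the map $\sigma\colon j\!\mapsto\!j\!-\!1$ carries $\{3,\ldots,n\}$ to $\{2,\ldots,n\!-\!1\}$, and $\rho$ applies $j\!\mapsto\!j\!-\!1$ to every index occurring in~$N$. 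This is the ``complementation'' trick of $\varphi$ from Lemma~\ref{ZamaknjenKompleks}, transposed from position~$1$ to position~$2$ of the weight vector. A position-by-position check using $w'_j\!=\!w_{j+1}$ for $2\!\leq\!j\!\leq\!n\!-\!1$ and $w'_n\!=\!w_2$ yields exactly weight $w'$ on the image; the length change is $(n\!-\!2\!-\!|M_2|)\!-\!|M_2|\!=\!n\!+\!2\!-\!2w_2$, matching the shift $2w_2\!-\!n\!-\!2$; the image avoids $e_{1n}$ since $i\!-\!1\!\leq\!n\!-\!1$; and bijectivity is immediate from reversibility of complementation and the relabelings.

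Verifying $\Psi\partial\!\equiv\!\pm\partial\Psi\pmod{F_n^{w'}}$ is the technical crux. The boundary brackets split into three kinds: those internal to $N$ (handled cleanly by $\rho$), those of the form $[e_{1i},e_{iy}]\!=\!e_{1y}$ (handled by $e_{1i}\!\mapsto\!e_{1,i-1}$), and those involving some $e_{2y}$ for $y\!\in\!M_2$ -- the latter being the true analogue of the $(-1)^{\Sigma M_1}$ case in Lemma~\ref{ZamaknjenKompleks}. The hard part will be the sign accounting in this last case, likely requiring a global parity correction $\overline{\Psi}$ analogous to $\overline{\varphi}$. Crucially, some boundary terms produced on the $\dva{w'}$ side contain $e_{1n}$ and die in $F_n^{w'}$, which is precisely why the target is the quotient and not $\dva{w'}$ itself; small examples (e.g.\ $w\!=\!(2,3,3,2)$, $w'\!=\!(2,3,2,3)$) confirm this pattern.

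\textbf{Part (2).} Assume $H_\ast\dva{w_2,w_3\!-\!1,\ldots,w_n\!-\!1}\!\cong\!0$. By~(\ref{eq.3.1}),
\[
\dva{w}/F_3^w \;=\; F_2^w/F_3^w \;\cong\; \dva{w_2,w_3\!-\!1,\ldots,w_n\!-\!1}_1, \qquad F_n^{w'} \;\cong\; \dva{w_3\!-\!1,\ldots,w_n\!-\!1,w_2}_1,
\]
and by Lemma~\ref{ZamaknjenKompleks} these two $(n\!-\!1)$-position complexes are cyclic shifts of each other, so both are acyclic by hypothesis. The long exact sequences of $0\!\to\!F_3^w\!\to\!\dva{w}\!\to\!\dva{w}/F_3^w\!\to\!0$ and $0\!\to\!F_n^{w'}\!\to\!\dva{w'}\!\to\!\dva{w'}/F_n^{w'}\!\to\!0$ therefore collapse to $H_\ast F_3^w\!\cong\!H_\ast\dva{w}$ and $H_\ast(\dva{w'}/F_n^{w'})\!\cong\!H_\ast\dva{w'}$. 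Combining with Part~(1) yields $H_\ast\dva{w}\!\cong\!H_\ast\dva{w'}_{2w_2-n-2}$, as claimed.
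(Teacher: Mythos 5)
Your proposal is correct and follows essentially the same route as the paper: the paper's $\varphi$ is exactly your complementation map $\Psi$ (the paper merely keeps the codomain indices as $1,3,\ldots,n+1$ instead of relabelling down by one), the chain-map verification is likewise deferred to the sign analysis of Lemma~\ref{ZamaknjenKompleks}, and Part (2) is derived from the same two long exact sequences after identifying $F_2^w/F_3^w$ and $F_n^{w'}$ via (\ref{eq.3.1}) and Lemma~\ref{ZamaknjenKompleks}. Your observation that boundary terms containing $e_{1n}$ force the codomain to be the quotient is exactly the right reason the first isomorphism is stated modulo $F_n^{w'}$.
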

\begin{proof} Define a linear map $\varphi\!: F_3^w\to \dva{w'}_{2w_2-n-2}/F_n^{w'}$ by
$$\varphi\bigl(e_{1b}\wedge_{i=2}^{n\!-\!1}\! e_{\{i\}\!\times\!M_i}\bigr)= (-1)^{\Sigma M_2}[e_{1b}\,e_{\!M_2^C\!\times\!\{n_{\!}+_{\!}1\}}\!\wedge_{i=3}^{n\!-\!1}\! e_{\{i\}\!\times\!M_i}],$$
where $M_2^C\!=\!\{3,\ldots,n\}\!\setminus\!M_2$ and indices in the codomain are $1,3,\ldots,n\!+\!1$. Our $\varphi$ is a bijection and proof that it is a chain map is similar to the one in Lemma \ref{ZamaknjenKompleks}.
\par Let $H_{\!\ast}\dva{w_2,w_3\!-\!\!1,\ldots,w_n\!-\!\!1}\cong0$, which by (\ref{eq.3.1}) is $H_{\!\ast}(F^w_2\!/\!F^w_3)$. By the long exact sequence and first part, $H_{\!\ast}\dva{w} \!=\! H_{\!\ast}F^w_2 \!\cong\! H_{\!\ast}F^w_3 \!\cong\! H_{\!\ast}(\dva{w'}_{2w_2\!-n-2}/\!F_n^{w'})$\!. By (\ref{eq.3.1}), $H_{\!\ast} F^{w'}_n \cong H_{\!\ast}\dva{w_3\!-\!1,\ldots,w_n\!-\!1,w_2} \cong H_{\!\ast}\dva{w_2,w_3\!-\!1,\ldots,w_n\!-\!1}_{n-2w_2} \cong0$, so by the long exact sequence, $H_{\!\ast}(\dva{w'}/F^{w'}_n)\cong H_{\!\ast}\dva{w'}$ and the result follows.
\end{proof}

Recall that any chain map $\varphi\!:B_\ast\!\to\!C_\ast$ induces a chain complex $D_\ast\!=\!\cone\varphi$, where $D_n\!=\!B_{n\!-\!1}\!\oplus\!C_n$ and $\partial(b,c)\!=\!\big(\partial(b),\varphi(b)\!-\!\partial(c)\big)$. Furthermore, there is an exact sequence $\ldots\!\to\!H_{k_{\!}+\!1}D_\ast \!\to\! H_kB_\ast \!\overset{\varphi_\ast}{\to}\! H_kC_\ast \!\to\! H_kD_\ast \!\to\! H_{k_{\!}-\!1}B_\ast\!\overset{\varphi_\ast}{\to}\!\ldots$.
\begin{Lmm}\label{DveTrojki}
Let $w\!=\!(2,w_2,\ldots,w_k,3,3,w_{k_{\!}+_{\!}3},\ldots,w_n)$ and $w'\!=\!(w_2\!\!-\!\!2,\ldots,w_k\!\!-\!\!2,$ $3,w_{k_{\!}+_{\!}3}\!-\!2,\ldots,w_n\!-\!2)$. Then $H_{\!\ast}\dva{w}\cong H_{\!\ast}\!\cone\!\big(\dva{w'}_k \!\!\overset{\cdot2}{\longrightarrow}\!\! \dva{w'}_k\!\big)$.
\end{Lmm}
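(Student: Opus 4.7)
My plan is to filter $\dva{w}$ by the position of its unique $e_{1,*}$ factor, show that most associated graded pieces are contractible, and reassemble the two survivors as a mapping cone. The delicate point is identifying the resulting connecting map with multiplication by~2.

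\textbf{Step 1 (filtration and vanishing).} Let $F^w_j$ be the subcomplex of $\dva{w}$ spanned by $\{e_{1i}e_{\!M};\,i\!\geq\!j\}$, so $F^w_2\!=\!\dva{w}$ and $F^w_{n+1}\!=\!0$. Formula~(\ref{eq.3.1}) gives
\[F^w_j/F^w_{j+1}\cong\dva{w_2\!-\!1,\ldots,w_{j-1}\!-\!1,w_j,w_{j+1}\!-\!1,\ldots,w_n\!-\!1}_1.\]
For $j\!\notin\!\{k\!+\!1,k\!+\!2\}$ the two entries coming from $w_{k+1}\!=\!w_{k+2}\!=\!3$ are both reduced to 2, producing consecutive 2's; by Lemma~\ref{ZaporedniDvojki} the quotient is contractible. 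Hence $H_{\!\ast}\dva{w}\!\cong\!H_{\!\ast}F^w_{k+1}$ and $H_{\!\ast}F^w_{k+2}\!\cong\!H_{\!\ast}(F^w_{k+2}/F^w_{k+3})\!=:\!H_{\!\ast}Q_2$.

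\textbf{Step 2 (the two surviving quotients).} The nontrivial quotients are
\[Q_1:=F^w_{k+1}/F^w_{k+2}\cong\dva{w_2\!-\!1,\ldots,w_k\!-\!1,3,2,w_{k+3}\!-\!1,\ldots,w_n\!-\!1}_1,\]
\[Q_2:=F^w_{k+2}/F^w_{k+3}\cong\dva{w_2\!-\!1,\ldots,w_k\!-\!1,2,3,w_{k+3}\!-\!1,\ldots,w_n\!-\!1}_1,\]
each of length $n\!-\!1$ with a single 2 adjacent to a single 3. I would cyclically rotate the 2 to position~1 by Lemma~\ref{ZamaknjenKompleks}, then apply Lemma~\ref{ZamaknjenKompleksPriDva} to rotate the 3 to the terminal position (the required acyclicity hypothesis being met by Lemma~\ref{ZaporedniDvojki} or~\ref{TriDvojke} applied to the resulting tail), and finally invoke Lemma~\ref{IndukcijaKompleks} to absorb the terminal 3. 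Careful bookkeeping of the accumulated dimension shifts should yield $Q_1\simeq\dva{w'}_{k+1}$ and $Q_2\simeq\dva{w'}_k$.

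\textbf{Step 3 (cone assembly).} The short exact sequence $0\!\to\!F^w_{k+2}\!\to\!F^w_{k+1}\!\to\!Q_1\!\to\!0$ realises $F^w_{k+1}$ as $\cone(\partial\colon Q_1[-1]\to F^w_{k+2})$, where $\partial$ is the connecting chain map. Combining with $F^w_{k+2}\simeq Q_2$ and the identifications from Step~2 (so $Q_1[-1]\simeq\dva{w'}_k$), we obtain
\[\dva{w}\;\simeq\;F^w_{k+1}\;\simeq\;\cone\bigl(\varphi\colon\dva{w'}_k\to\dva{w'}_k\bigr)\]
for the induced chain map $\varphi$.

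\textbf{Step 4 (main obstacle: $\varphi=\cdot2$).} The heart of the lemma is to verify that $\varphi$ is multiplication by~2. For a representative $e_{1,k+1}\,e_N$ of $Q_1$, the connecting map takes the boundary $\partial(e_{1,k+1}e_N)$ and projects its $F^w_{k+2}$-component. Only brackets producing $e_{1,m}$ with $m\!\geq\!k\!+\!2$ survive the projection. The explicit bracket $[e_{1,k+1},e_{k+1,k+2}]\!=\!e_{1,k+2}$ gives one contribution; a second, independent contribution emerges when the representative is traced through the cyclic rotations of Step~2, which exchange the symmetric roles of the two equal entries $w_{k+1}\!=\!w_{k+2}\!=\!3$. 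These two contributions add (rather than cancel) because the transposition of identical entries preserves signs, producing the factor~2. Making this sign-tracking fully rigorous through the composite of cyclic shifts, Lemma~\ref{ZamaknjenKompleksPriDva} isomorphisms, and dimension shifts is the most intricate part of the argument, and is where the entire $\cdot2$ phenomenon lives.
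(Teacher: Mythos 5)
Your Step 1 is sound (and agrees with the paper's first reduction), but the argument breaks at Step 2: the identifications $Q_1\!\simeq\!\dva{w'}_{k+1}$ and $Q_2\!\simeq\!\dva{w'}_k$ are false in general. Take $n\!=\!5$, $k\!=\!1$, $w\!=\!(2,3,3,3,4)$, so $w'\!=\!(3,1,2)\!\cong\!\dva{2,1}_1$, whose homology is $\Z$ concentrated in degree $2$. Your own formula gives $Q_1\!\cong\!\dva{3,2,2,3}_1$, which is contractible by Lemma~\ref{ZaporedniDvojki} (consecutive $2$'s), whereas $\dva{w'}_{k+1}$ is not acyclic; and $Q_2\!\cong\!\dva{2,3,2,3}_1$, whose homology is $\Z_2$ in degree $3$, not the free module $H_{\!\ast}\dva{w'}_k\!\cong\!\Z$. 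There is also no tool available for the reduction you sketch: Lemma~\ref{IndukcijaKompleks} removes only entries equal to $1$ or to the length of the sequence, never a $3$, and Lemma~\ref{ZamaknjenKompleksPriDva} computes homology only under an acyclicity hypothesis which here would amount to $H_{\!\ast}\dva{w'}\!\cong\!0$, i.e.\ to the vanishing of the very complex you want to keep. Since neither surviving quotient is a shifted $\dva{w'}$, Steps 3 and 4 cannot be assembled; in particular the $\cdot2$ does not arise as the connecting map of the $e_{1\ast}$-filtration (in the example above that map is the zero map out of an acyclic complex, yet the lemma's conclusion is nontrivial).

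The actual mechanism is different. One first reduces $k\!\geq\!2$ to $k\!=\!1$ by rotations (Lemmas~\ref{ZaporedniDvojki}, \ref{ZamaknjenKompleksPriDva}, \ref{ZamaknjenKompleks}); for $k\!=\!1$ one discards $F^w_4$ as in your Step 1, but then one does \emph{not} split $\dva{w}/F^w_4$ along the filtration. Instead a Morse matching cancels the generators $e_{12}e_{23}e_{2a}e_{3b}e_{\!M}$ against $e_{13}e_{2a}e_{3b}e_{\!M}$, and the critical complex is spanned by the strictly smaller families $B\!=\!\{e_{13}e_{23}e_{3a}e_{3b}e_{\!M}\}$ and $C\!=\!\{e_{12}e_{2a}e_{2b}e_{\!M}\}$; it is these, not $Q_2$ and $Q_1$, that are isomorphic to $\dva{w'}_2$ and $\dva{w'}_1$. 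The factor $2$ is the sum of the weights of the two zig-zag paths joining a fixed critical element of $B$ to a fixed critical element of $C$. So your instinct in Step 4 that two symmetric contributions add rather than cancel is correct, but that phenomenon lives in the Morse differential of the reduced complex, not in the connecting homomorphism of your filtration.
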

\begin{proof}
Let $k\!=\!\!1$, so $w\!=\!(2,\!3,\!3,\ldots)$. By (\ref{eq.3.1}) and Lemma \ref{ZaporedniDvojki}, $F^w_k\!/\!F^w_{k+1}\!\simeq\!0$ for $k\!\geq\!4$, so $H_{\!\ast}F^w_4\!\cong\!0$ and  $H_{\!\ast}\dva{w}\cong H_{\!\ast}(\dva{w}/\!F^w_4)$. There are 4 types of generators in $\dva{w}/\!F^w_4$:
\begin{itemize}[leftmargin=5mm]
\item $A=\{[e_{12}e_{23}e_{2a}e_{3b} e_M];\, \text{all indices in $M$ are $\geq\!4$}\}$,
\item $B=\{[e_{13}e_{23}e_{3a}e_{3b} e_M];\, \text{all indices in $M$ are $\geq\!4$}\}$,
\item $C=\{[e_{12}e_{2a}e_{2b}e_M];\;\;\;\;\;\text{all indices in $M$ are $\geq\!4$}\}$,
\item $D=\{[e_{13}e_{2a}e_{3b}e_M];\;\;\;\;\;\text{all indices in $M$ are $\geq\!4$}\}$.
\end{itemize}
The set $\mathcal{M}=\{ A\!\ni\!e_{12}e_{23}e_{2a}e_M\!\to\! e_{13}e_{2a}e_M\!\in\!D\}$ is a Morse matching, with critical elements $\mathring{\mathcal{M}}\!=\!B\!\cup\!C$. Nontrivial zig-zag paths go from $B$ to $C$ and come in pairs:\vspace{-1mm}
\[\hspace{-0pt}\begin{tikzpicture}[description/.style={fill=white, inner sep=1pt}]
     \matrix (a) [matrix of math nodes, row sep=2mm, column sep=9mm, text height=1.5ex, text depth=1.25ex]
    {{[e_{13}e_{23}e_{3a}e_{3b}e_{\!M}]\!\!} & {\!\![e_{13}e_{2a}e_{3b}e_{\!M}]}\\
     {[e_{12}e_{23}e_{2a}e_{3b}e_{\!M}]\!\!} & {\!\![e_{12}e_{2a}e_{2b}e_{\!M}]}\\};
     \path[->] (a-1-1.east) edge node[description]{\tiny$1$}   (a-1-2.west);
     \path[->] (a-2-1.east) edge node[description]{\tiny$-\!1$}(a-1-2.west);
     \path[->] (a-2-1.east) edge node[description]{\tiny$1$}   (a-2-2.west);
\end{tikzpicture} \;\raisebox{11pt}{and}\;
\begin{tikzpicture}[description/.style={fill=white, inner sep=1pt}]
     \matrix (a) [matrix of math nodes, row sep=2mm, column sep=9mm, text height=1.5ex, text depth=1.25ex]
    {{[e_{13}e_{23}e_{3a}e_{3b}e_{\!M}]\!\!} & {\!\![e_{13}e_{2b}e_{3a}e_{\!M}]}\\
     {[e_{12}e_{23}e_{2b}e_{3a}e_{\!M}]\!\!} & {\!\![e_{12}e_{2a}e_{2b}e_{\!M}]},\\};
     \path[->] (a-1-1.east) edge node[description]{\tiny$-\!1$} (a-1-2.west);
     \path[->] (a-2-1.east) edge node[description]{\tiny$-\!1$} (a-1-2.west);
     \path[->] (a-2-1.east) edge node[description]{\tiny$-\!1$} (a-2-2.west);
\end{tikzpicture}\vspace{-2mm}\]
which add up to $\cdot2$. We have $\langle \mathring{\mathcal{M}}\rangle\!/\!\langle C\rangle \!\cong\! \dva{w'}_2$ (omit $e_{13}e_{23}$ and indices $1,2$) and $\langle C\rangle \!\cong\! \dva{w'}_1$ (omit $e_{12}$ and indices $1,3$), so $H_{\!\ast}\dva{w}\!\cong\! H_{\!\ast} \langle\mathring{\mathcal{M}}\rangle\!\cong\! H_{\!\ast}\!\cone\bigl(\dva{w'}_1\!\stackrel{\cdot2}{\to}\!\dva{w'}_1\!\bigr)$.
\par Finally, if $k\!\geq\!2$, then $H_{\!\ast}\dva{w} \cong H_{\!\ast}\dva{2,3,3,\!w_{k+3},\ldots,\!w_n,\!w_2,\ldots,\!w_k}_{\sum_{i=2}^k\!(2w_i-n-2)}$ $\cong H_{\!\ast}\cone\big( {\scriptstyle\cdot2}\,\rotatebox[origin=c]{-90}{$\circlearrowright$}\, \dva{3,\!w_{k+3}\!-\!2,\ldots,\!w_n\!-\!2,\!w_2\!-\!2,\ldots,\!w_k\!-\!2}_{1+\sum_{i=2}^k\!(2w_i-n-2)}\big) \cong $
$H_{\!\ast}\cone$ $\big( {\scriptstyle\cdot2}\,\rotatebox[origin=c]{-90}{$\circlearrowright$}\, \dva{w_2\!-\!2,\ldots,w_k\!-\!2,3,w_{k+3}\!-\!2,\ldots,w_n\!-\!2}_{ 1+\sum_{i=2}^k\!(2w_i-n-2)-\sum_{i=2}^k\!(2(w_i-2)-(n-1))}\big) \cong H_{\!\ast}\!\cone\!\big(\dva{w'}_k \!\overset{\cdot2}{\longrightarrow}\! \dva{w'}_k\!\big)$ by Lemmas \ref{ZaporedniDvojki}, \ref{ZamaknjenKompleksPriDva}, \ref{ZamaknjenKompleks}, so the job is done.
\end{proof}

\begin{Lmm}\label{StozecMedDvojkama} Let $w\!=\!(2,w_2,\ldots,w_{k_{\!}-\!1},2,w_{k_{\!}+\!1},\ldots,w_n)$.
\begin{enumerate}[leftmargin=6mm]
\item Let $A\!=\!\{e_{\{1,\ldots,k_{\!}-\!1\}\!\times\!\{k\}}e_{ka}e_M\!\in\!\dva{w}\}$ and $B\!=\!\{e_{1a} e_{\{2,\ldots,k_{\!}-\!1\}\!\times\!\{k\}}e_M\!\in\!\dva{w};\, a\!>\!k\}$. There exists a Morse matching $\mathcal{M}$ for $\dva{w}$, such that $\mathring{\mathcal{M}}\!=\!A\!\cup\!B$, $\mathring{\partial}|_B\!=\!\partial|_B$, \vspace{-1mm}
    \[\begin{array}{r@{\hspace{3pt}}l}
    \hspace{-0pt}\mathring{\partial}|_A\!\!:e_{\{1,\ldots,k_{\!}-\!1\}\!\times\!\{k\}}e_{ka} e_{\!M}
    &\mapsto (\!-\!1\!)^{k_{\!}+_{\!}1}\! \big(e_{\{1,\ldots,k_{\!}-\!1\}\!\times\!\{k\}}\partial(e_{ka}e_{\!M}\!) \!+\!n_Me_{1a}e_{\{2,\ldots,k_{\!}-\!1\}\!\times\!\{k\}}e_{\!M}\!\big)\\
    &+\sum_{(b,c)\in X} (\!-\!1\!)^{\epsilon_{bc}\!+k+1} e_{1c}e_{\{2,\ldots,k_{\!}-\!1\}\!\times\!\{k\}} e_{ba}e_{\!M\!\setminus\!\{\!(b,c)\!\}},
    \end{array}\vspace{-1mm}\]
    where $n_M\!=\!|\{b\!\in\!\{1,\ldots,k\!-\!1\};\, (b,a)\!\notin\!M\}|$, $\epsilon_{bc}\!=\!($position of $(b,c)$ in $M)$, and $X=\{(b,c)\!\in\!M;\, b\!<\!k\!<\!c,(b,a)\!\notin\!M\}$.
\item $H_{\!\ast}\dva{w} \cong H_{\!\ast}\cone\varphi$ for some chain map $\varphi\!: F^w_{k_{\!}+\!1}\!\!\to\! F^w_{k_{\!}+\!1}$.
\item $H_{\!\ast}\dva{2,\!w_2,\ldots,\!w_{n_{\!}-\!2},\!2,\!w_n}\cong H_{\!\ast}\cone\bigl({\scriptstyle\cdot(w_n\!-\!1)}\,\rotatebox[origin=c]{-90}{$\circlearrowright$}\, \dva{w_2\!-\!\!1,\ldots,\!w_{n_{\!}-\!2}\!-\!\!1,\!1,\!w_n}_1\bigr)$.
\end{enumerate}
\end{Lmm}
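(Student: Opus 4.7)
The plan is to prove (1), (2), (3) in order by constructing a Morse matching for (1), extracting a short exact sequence of chain complexes for (2), and doing explicit weight bookkeeping for (3). For part (1), I would define $\mathcal{M}$ based on the bracket $[e_{1b},e_{bk}]=e_{1k}$: pair each wedge $e_N$ containing both $e_{1b}$ (for some $b\in\{2,\ldots,k-1\}$) and $e_{bk}$ with the wedge in which $e_{1b}e_{bk}$ is replaced by $e_{1k}$. Since $b$ is determined by the unique $e_{1\ast}$-factor, the higher-degree side of each such pair is unambiguous. For the lower-degree side, when $e_{1k}e_P$ admits several partners, I would pick the canonical $b$ (say, the smallest admissible) and use a secondary pairing for the wedges $e_{1b}e_{bk}e_P$ whose primary partner is already taken, matching them instead to wedges differing by a bracket inside $P$. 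The critical set would then be exactly $A\cup B$: an $A$-element has all of $e_{1k},\ldots,e_{k-1,k}$ present, blocking every primary reversal, while a $B$-element carries $e_{1a}$ with $a>k$, so the bracket $[e_{1a},e_{ak}]$ is never available. Acyclicity of the reversed digraph should follow from a height function counting missing $e_{bk}$-factors with a lex refinement on $M$. Tracing zig-zag paths from an $A$-cell would then give the three contributions in the stated formula for $\mathring{\partial}|_A$: the block term from brackets preserving the $\{1,\ldots,k-1\}\times\{k\}$-pattern, the $n_M$-term from the bracket $[e_{1k},e_{ka}]=e_{1a}$ reached through $n_M$ separate zig-zags, and the $X$-sum from zig-zags using $[e_{bc},e_{ca}]=e_{ba}$ inside $M$.

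For part (2), I would observe that $\mathring{\partial}|_B=\partial|_B$ keeps $\langle B\rangle$ invariant, giving a short exact sequence $0\to\langle B\rangle\to\mathring{C}_\ast\to\mathring{C}_\ast/\langle B\rangle\to 0$ of chain complexes, with the quotient spanned by $A$-cells under the component of $\mathring{\partial}|_A$ landing in $\langle A\rangle$. A parallel Morse reduction applied to $F^w_{k+1}$ (the same construction as in (1), one index-layer inside) should yield $\langle B\rangle$ as its critical complex, giving a chain homotopy equivalence $\langle B\rangle\simeq F^w_{k+1}$ via $e_{1a}e_{\{2,\ldots,k-1\}\times\{k\}}e_M\leftrightarrow e_{1a}e_M$; analogously, $\mathring{C}_\ast/\langle B\rangle\simeq F^w_{k+1}[-1]$ via $e_{\{1,\ldots,k-1\}\times\{k\}}e_{ka}e_M\leftrightarrow e_{1a}e_M$. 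The component of $\mathring{\partial}|_A$ landing in $\langle B\rangle$ then descends to a chain map $\varphi\colon F^w_{k+1}\to F^w_{k+1}$, and $\mathring{C}_\ast$ becomes the mapping cone $\cone\varphi$. Combined with $\dva{w}\simeq\mathring{C}_\ast$ from AMT, this gives $H_\ast\dva{w}\cong H_\ast\cone\varphi$.

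For part (3), I would specialise to $k=n-1$, which forces $a=n$ in both $A$ and $B$. The set $X$ requires $c>n-1$, hence $c=n$, together with $(b,n)\notin M$; but $(b,c)=(b,n)\in M$ by definition of $X$, so $X=\emptyset$ and the $X$-sum in $\mathring{\partial}|_A$ vanishes. Weight bookkeeping on an $A$-element gives $|\{(b,n)\in M\}|=n-1-w_n$, so $n_M=(n-2)-(n-1-w_n)=w_n-1$ independently of $M$. Under the identification $F^w_n\cong\dva{w_2-1,\ldots,w_{n-2}-1,1,w_n}_1$ from (3.1), the chain map $\varphi$ is therefore $(-1)^n(w_n-1)\cdot\mathrm{id}$; the overall sign is immaterial for the cone homology, giving the stated formula. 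The main obstacle will be the Morse matching construction in (1), especially designing the secondary pairing so that no spurious critical cells remain and so that its coefficient and sign contributions correctly produce the $X$-sum; a secondary hurdle is the parallel Morse reduction on $F^w_{k+1}$ in (2), which is conceptually a replay of (1) but requires careful formalisation.
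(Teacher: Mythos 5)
Your overall strategy coincides with the paper's: match wedges via $[e_{1b},e_{bk}]=e_{1k}$, read off $\mathring{\partial}|_A$ from zig-zag paths, split the Morse complex along $\langle B\rangle$ to obtain a mapping cone for (2), and specialise to $k=n-1$ for (3). The gap is in part (1): the ``secondary pairing'', which you yourself single out as the main obstacle, is left entirely undefined, and your worry that $e_{1k}e_P$ may admit several upward partners is where the argument stalls. What you are missing is the weight constraint. Since $w_k=2$, every basis wedge of $\dva{w}$ satisfies $|\{x;(x,k)\!\in\!M\}|-|\{y;(k,y)\!\in\!M\}|=k-2$; hence a wedge containing $e_{1k}$ either contains all of $e_{1k},\ldots,e_{k-1,k}$ together with exactly one $e_{ka}$ (an $A$-cell), or is missing exactly one $e_{ak}$ with $1<a<k$ and contains no $e_{k\ast}$ (a $D$-cell in the paper's notation). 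Such a $D$-cell has exactly one admissible $b$, namely its unique missing index $a$, so the primary pairing $e_{1a}e_{\{2,\ldots,k-1\}\times\{k\}}e_M\leftrightarrow e_{\{1,\ldots,\hat a,\ldots,k-1\}\times\{k\}}e_M$ is already a bijection between all non-critical cells; no secondary pairing exists or is needed, and acyclicity is immediate because the only boundary term of a $C$-cell that lands back in $D$ is the one returning to its own partner.

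The same oversight resurfaces in part (2): $\langle B\rangle$ is not merely homotopy equivalent to $F^w_{k+1}$ after another Morse reduction --- it \emph{equals} $F^w_{k+1}$ on the nose, again because $w_k=2$ forces every wedge $e_{1a}e_{M'}$ with $a>k$ to contain all of $e_{2k},\ldots,e_{k-1,k}$ and no $e_{k\ast}$. Your proposed identification $e_{1a}e_{\{2,\ldots,k-1\}\times\{k\}}e_M\leftrightarrow e_{1a}e_M$ cannot be right as stated, since $e_{1a}e_M$ has the wrong weight at $k$ and does not lie in $\dva{w}$ at all; the correct statements are the equality $\langle B\rangle=F^w_{k+1}$ and the chain isomorphism $\langle\mathring{\mathcal{M}}\rangle/\langle B\rangle\cong\langle B\rangle_1$ given by $e_{\{1,\ldots,k-1\}\times\{k\}}e_{ka}e_M\mapsto e_{1a}e_{\{2,\ldots,k-1\}\times\{k\}}e_M$. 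Part (3) of your proposal ($X=\emptyset$, $n_M=w_n-1$ independent of $M$, sign irrelevant for the cone) is correct and agrees with the paper. Once the two points above are repaired by invoking $w_k=2$, your argument becomes the paper's proof.
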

\begin{proof}
\emph{(1):} There are four types of generators in $\dva{w}$: $A$, $B$,
\begin{itemize}[leftmargin=5mm]
\item[] $C=\{e_{1a} e_{\{2,\ldots,k_{\!}-\!1\}\!\times\!\{k\}} e_M;\: a\!<\!k,\text{ there is no index 1 or $k$ in }M\}$,
\item[] $D=\{e_{\{1,\ldots,\hat a,\ldots,k_{\!}-\!1\}\!\times\!\{k\}}e_M;\: 1\!<\!a\!<\!k,\text{ there is no index 1 or $k$ in }M\}$.
\end{itemize}
Set $\mathcal{M}\!=\!\big\{C\!\ni\!e_{1a} e_{\{2,\ldots,k_{\!}-\!1\}\!\times\!\{k\}}e_M \!\to\! e_{\{1,\ldots,\hat{a},\ldots,k_{\!}-\!1\}\!\times\!\{k\}}e_M\!\in\!D\big\}$ is a Morse matching, with $\mathring{\mathcal{M}}\!=\!A\!\cup\!B$. Zig-zag paths starting in $B$ are arrows and end in $B$, so $\mathring{\partial}|_B\!=\!\partial|_B$. Zig-zag paths starting in $A$ are $e_{\{1,\ldots,k_{\!}-\!1\}\!\times\!\{k\}}e_{ka}e_M \raisebox{-3pt}{$\overset{(\!-\!1\!)^{k\!+\!1}}{\longrightarrow}$}\! e_{1a}e_{\{2,\ldots,k_{\!}-\!1\}\!\times\!\{k\}}e_M$ and \vspace{-1mm} 
\[\begin{tikzpicture}[description/.style={fill=white, inner sep=1pt}]
    \matrix (a) [matrix of math nodes, row sep=2mm, column sep=20mm, text height=1.5ex, text depth=1.25ex]
    {e_{\{1,\ldots,k_{\!}-\!1\}\!\times\!\{k\}} e_{ka} e_M      & e_{\{1,\ldots,\hat{b},\ldots,k_{\!}-\!1\}\!\times\!\{k\}} e_{ba} e_M \\[-2mm]
    e_{1b} e_{\{2,\ldots,k_{\!}-\!1\}\!\times\!\{k\}} e_{ba}e_M & e_{1a} e_{\{2,\ldots,k_{\!}-\!1\}\!\times\!\{k\}} e_M\text{\; for }n_M\text{ choices},\\[-2mm]
                                                                & e_{1c} e_{\{2,\ldots,k_{\!}-\!1\}\!\times\!\{k\}} e_{ba} e_{M\!\setminus\!\{\!(b,c)\!\}}\text{\; for }(b,c)\!\in\!X.\\};
    \path[->] (a-1-1.east) edge node[description]{\tiny $(\!-\!1\!)^{b}$} (a-1-2.west);
    \path[->] (a-2-1.east) edge node[description]{\tiny $(\!-\!1\!)^{b_{\!}+_{\!}1}$} (a-1-2.west);
    \path[->] (a-2-1.east) edge node[description]{\tiny $(\!-\!1\!)^{k_{\!}+_{\!}1}$} (a-2-2.west);
    \path[->] (a-2-1.east) edge node[description]{\tiny $(\!-\!1\!)^{\epsilon_{bc}\!+_{\!}k_{\!}+_{\!}1}$} (a-3-2.west);
\end{tikzpicture}\vspace{-2mm}\]
\emph{(2):} Follows from \emph{(1)}, because $\langle B\rangle\!=\!F^w_{k+1}$ and $\langle\mathring{\mathcal{M}}\rangle\!/\!\langle B\rangle\!\cong\! \langle B\rangle_1$ (we mod out $B$, so 2nd and 3rd summand in $\mathring{\partial}|_A$ are 0, thus $e_{\{1,\ldots,k_{\!}-\!1\}\!\times\!\{k\}}e_{ka}e_M \!\mapsto\! e_{1a}e_{\{2,\ldots,k_{\!}-\!1\}\!\times\!\{k\}}e_M$ is a chain isomorphism). Ergo $\varphi$ is the part of $\mathring{\partial}|_A$ that goes to $B$.\\
\emph{(3):} Follows from \emph{(2)}, since $F^w_{k+1}\!=\!F^w_n\!\cong\!\dva{w'}_1$ (omit $e_{1n}$ and index $1$) and $X\!=\!\emptyset$ and $n_M\!=\!|\{b\!\in\!\{1,\ldots,n\!-\!2\};\,(b,n)\!\notin\!M\}| \!=\!n\!-\!1\!-\!(n\!-\!w_n) \!=\!w_n\!-\!1$.
\end{proof}

\vspace{1mm} Dwyer \cite{citearticleDwyerHIUTM} reports how Kunkel proved that $H_{\!\ast}\mathfrak{nil}_n$ has $p$-torsion for prime $p\!<\!n\!-\!1$. Now we can easily see that $H_{\!\ast}\mathfrak{nil}_n$ also has $p^m$-torsion for every $p^m\!<\!n\!-\!1$:

\begin{Exp}
Let $q\!=\!p^m\!=\!n\!-\!2$ and $w\!=\!(2,3,\ldots,q\!+\!1,2,q\!+\!1)$. By Lemma \ref{StozecMedDvojkama}, $H_\ast\dva{w}\cong H_\ast\cone(\dva{w'}_1 \!\overset{\cdot q}{\longrightarrow}\! \dva{w'}_1)$. Since $w'\!=\!(2,\ldots,q,1,q\!+\!1)$ is a permutation of $(1,\ldots,q\!+\!1)$ and $|\{(i,j);\, i\!<\!j,w'_i\!>\!w'_j\}| \!=\!q\!-\!1$, we have $H_k\dva{w'}\cong \icases{\Z}{\text{if }k=q-1}{0}{\text{if }k\neq q-1}{\scriptstyle}{-0pt}{\Big}$, so $H_k\dva{w}\cong \icases{\Z_q}{\text{if }k=q}{0}{\text{if }k\neq q}{\scriptstyle}{-0pt}{\Big}$. If $q\!<\!n\!-\!2$, then $H_\ast\dva{w,q\!+\!3,\ldots,n}\cong H_\ast\dva{w}$. \hfill$\lozenge$
\end{Exp}

\vspace{1mm} In \cite{citearticleDwyerHIUTM}, Dwyer proved that there is no $p$-torsion in $H_{\!\ast}\mathfrak{nil}_n$ for any prime $p\!\geq\!n\!-\!1$. The next example shows that $H_{\!\ast}\mathfrak{nil}_n$ can have $p^m$-torsion for some $p^m\!\geq\!n\!-\!1$:

\begin{Exp}
Let $w\!=\!(2,\!4,\!7,\!5,\!4,\!2,\!5,\!7)$. Complex $\dva{w}$ is spanned by 192 wedges. Using Lemma \ref{StozecMedDvojkama}, $H_{\!\ast}\dva{w}\!\cong\! H_{\!\ast}\!\cone({\scriptstyle\varphi}\,\rotatebox[origin=c]{-90}{$\circlearrowright$}\, F^w_7)$. By (\ref{eq.3.1}), $F^w_8\!\cong\! \dva{3,\!6,\!4,\!3,\!1,\!4,\!7}_1\!\cong\! \dva{3,\!6,\!4,\!3,\!1,\!4}_1\!\cong\! \dva{2,\!5,\!3,\!2,\!3}_5\!\cong\! \dva{2,\!3,\!2,\!3}_8$, so $H_{\!10}F^w_8\!\cong\!\Z_2$ generated by $[e_{18}e_{\!M}\!=:\!a]$ for $M\!=\!\{(2,\!7),(4,\!5)\}\!\cup\!\{(i,\!6); i\!=\!2,\ldots,5\}\!\cup\!\{(3,\!i); i\!=\!4,5,7\}$. By Lemmas \ref{IndukcijaKompleks}, \ref{ObratniKompleks}, \ref{StozecMedDvojkama},
\[\begin{array}{r@{\hspace{3pt}}l}
H_{\!\ast}F^w_7\!/\!F^w_8  &\cong H_{\!\ast}\dva{3,\!6,\!4,\!3,\!1,\!5,\!6}_1\cong H_{\!\ast}\dva{2,\!5,\!3,\!2,\!4,\!5}_5\cong \\
&\cong H_{\!\ast}\dva{2,\!3,\!5,\!4,\!2,\!5}_5\cong H_{\!\ast}\!\cone\bigl({\scriptstyle\cdot4}\,\rotatebox[origin=c]{-90}{$\circlearrowright$}\, \dva{2,\!4,\!3,\!1,\!5}_6\bigr).
\end{array}\]
Because $H_{\!\ast}\dva{2,\!4,\!3,\!1,\!5}\!\cong\!\Z$ generated by $[e_{14}e_{23}e_{24}e_{34}]$, we have $H_{\!\ast}\dva{2,\!3,\!5,\!4,\!2,\!5}\!\cong\!\Z_4$ generated by $[e_{16}e_{\!N}]$ where $N\!=\!\{(2,\!3),(2,\!4),(2,\!5),(3,\!4)\}$. 
Then $H_5\dva{2,\!5,\!3,\!2,\!4,\!5}$ is generated by $[e_{16}e_{\!N'}]$ for $N'\!=\!\{(7\!-\!y,7\!-\!x); (x,y)\!\in\!N\}$, and $H_{10} F^w_7\!/\!F^w_8$\! is generated by $[e_{17}e_{28}e_{\!P}\!=:\!b]$ for $P\!=\!\{(3,\!4)_{\!},\!(3,\!5)_{\!},\!(3,\!7)_{\!},\!(4,\!5)\}\!\cup\!\{(i,\!6); i\!=\!2,\ldots,5\}$.
\par Exact sequence $\ldots\!\to\!H_{\!k_{\!}+_{\!}1}\!\frac{F^w_7}{F^w_8} \!\overset{\chi}{\to}\!H_{\!k}F^w_8  \!\overset{\iota_\ast}{\to}\!H_{\!k}F^w_7 \!\overset{\pi_\ast}{\to}\!H_{\!k}\frac{F^w_7}{F^w_8} \!\overset{\chi}{\to}\!H_{\!k_{\!}-\!1}\!F^w_8 \!\to\!\ldots$ implies $H_k F^w_7\!\cong\!0$ for $k\!\ne\!10$ and $H_{10} F^w_7\!=\!($extension of $\Z_2$ by $\Z_4)\!\cong\!(\Z_2\!\times\!\Z_4$ or $\Z_8)$ generated by $[a]$ and $[b]$. Since $\partial\bigl(\sum_{i\in\{3,4,5,7\}} e_{17}e_{2i}e_{i8}e_{\!P}\bigr) = 4e_{17}e_{28}e_{\!P}\!+\!e_{18}e_{\!M}$ $(\ast)$ in $F^w_7$, we have $0\!=\!4[b]\!+\![a]$ in $H_{10}F^w_7$\!, hence $[b]$ itself is a generator and $H_{10}F^w_7\!\cong\!\Z_8$. 
\par Let us compute $\varphi_\ast$. In the proof of Lemma \ref{StozecMedDvojkama} for our case, $a,b\!\in\!\langle B\rangle$ and $x\!:=\!e_{16}e_{67}e_{28}e_{\!P}\!=\! e_{\{1,\ldots,5\}\!\times\!\{6\}}e_{67}e_{\!R} \!\in\!\langle A\rangle\!\cong\!\langle B\rangle_1$ for $R\!=\!\{(2,\!8)_{\!},\! (3,\!4)_{\!},\! (3,\!5)_{\!},\! (3,\!7)_{\!},\! (4,\!5)\}$;
$$\varphi(x)=\mathring{\partial}(x) =\pm n_{\!R}e_{17}e_{\{2,\ldots,5\}\!\times\!\{6\}}e_{\!R} \pm\!e_{18}e_{\{2,\ldots,5\}\!\times\!\{6\}}e_{27}e_{\!R\!\setminus\!\{\!(2,8)\!\}}= \pm4b \pm\!a.$$
By $(\ast)$, $\varphi_\ast(x)$ is 0 or $\pm8[b]$. In both cases, $\varphi_\ast\!=\!0$. Exact sequence $\ldots\!\to\!H_{\!k_{\!}+\!1}\dva{w} \!\to\! H_{\!k}F^w_7 \!\overset{\varphi_\ast}{\to}\! H_{\!k}F^w_7 \!\to\! H_{\!k}\dva{w} \!\to\! H_{\!k_{\!}-\!1}F^w_7\!\overset{\varphi_\ast}{\to}\!\ldots$ then implies $H_{\!k}\dva{w}\cong \icases{\Z_8}{\text{if }k\in\{10,11\}}{0}{\text{if }k\notin\{10,11\}}{\scriptstyle}{-0pt}{\Big}$. \hfill$\lozenge$
\end{Exp}

\vspace{3mm}\vfill
\section{Free part of homology}
We can generalise the filtration from the previous section to an arbitrary complex $\dva{w_1,\ldots,w_n}\!=\!\dva{w}$. In every $e_{\!M}\!\in\!\dva{w}$, there are exactly $t\!:=\!w_1\!-\!1$ occurrences of $e_{1\ast}$. Thus $F^w_k\!:=\!\big\langle e_{1i_1}\ldots e_{1i_t}e_{\!M};\, i_1\!\!+\!\ldots\!+\!i_t\!\geq\!k,\, 1\text{ is not in }M\big\rangle \!\leq\!\dva{w}$ is a subcomplex.
\par Define $w(i_1,\ldots,i_t)\!\in\!\mathcal{S}_{n_{\!}-\!1}$ as $(w'_2,\ldots,w'_n)$, where $w'_j\!=\!\icases{w_j}{\text{if }j\in\{i_1\!,\ldots,i_t\!\}}{w_j\!-\!1}{\text{if }j\notin\{i_1\!,\ldots,i_t\!\}}{\scriptstyle}{-1pt}{\big}$. Then \vspace{-1mm}
\begin{equation}\label{eq.4.1} \textstyle{F^w_k\!/\!F^w_{k+1}\cong \bigoplus_{i_1\!+\ldots+i_t=k}\dva{w(i_1,\ldots,i_t)}_t.}\end{equation}\clearpage

\begin{Exp}\label{Primer33333}
Let us compute $H_{\!\ast}\dva{3,\!3,\!3,\!3,\!3}$. By Lemmas \ref{ZaporedniDvojki}, \ref{StozecMedDvojkama}, (\ref{eq.4.1}), $F^w_{10}\!=\!0$,\, $F^w_9\!/\!F^w_{10} \!\cong\!\dva{2,\!2,\!3,\!3}_2\!\simeq\!0$,\, $H_{\!\ast}F^w_8\!/\!F^w_9\!\cong\! H_{\!\ast}\dva{2,\!3,\!2,\!3}_2\!\cong\!\Z_2$ generated by $[e_{13}e_{15}\,e_{24}e_{35}]$,\, $F^w_7\!/\!F^w_8\!\cong\! \dva{3,\!2,\!2,\!3}_2\!\oplus\!\dva{2,\!3,\!3,\!2}_2\!\simeq\!0$,\, $H_{\!\ast}F^w_6\!/\!F^w_7\!\cong\! H_{\!\ast}\dva{3,\!2,\!3,\!2}_2\!\cong\! H_{\!\ast}\dva{2,\!3,\!2,\!3}_3\cong\Z_2$ generated by $[e_{12}e_{14}\,e_{23}e_{25}e_{35}]$,\,  $F^w_5\!/\!F^w_6\!\cong\! \dva{3,\!3,\!2,\!2}_2\!\simeq\!0$,\, $F^w_5\!=\!\dva{w}$. Thus $H_{\!\ast}\dva{w} \!\cong\! H_{\!\ast}F_6$ and $H_{\!\ast}F^w_7 \!\cong\! H_{\!\ast}F^w_8 \!\cong\! H_{\!\ast}F^w_8\!/\!F^w_9 \!\cong\! \Z_2$ generated by $[e_{13}e_{15}\,e_{24}e_{35}]$. In the exact sequence $\ldots\!\to\!H_{\!k_{\!}+\!1}\!\frac{F^w_6}{F^w_7} \!\overset{\chi}{\to}\!H_{\!k}F^w_7 \!\to\!H_{\!k}F^w_6 \!\to\!H_{\!k}\frac{F^w_6}{F^w_7} \!\overset{\chi}{\to}\!H_{\!k_{\!}-\!1}\!F^w_7\!\to\!\ldots$ our $\chi$ sends \vspace{-1mm}
\[\begin{array}{r @{\hspace{3pt}} l}
[e_{12}e_{14}\,e_{23}e_{25}e_{35}]&\mapsto [e_{13}e_{14}e_{25}e_{35}\!+\!e_{14}e_{15}e_{23}e_{35}]=\\
&\,[-e_{13}e_{15}e_{24}e_{35}\!-\!\partial(e_{13}e_{15}e_{23}e_{34}e_{35})\!+\!\partial(e_{13}e_{14}e_{25}e_{34}e_{45})].
\end{array}\]
It is an isomorphism, hence by exactness, $H_{\!\ast}\dva{3,\!3,\!3,\!3,\!3}\cong H_{\!\ast}F^w_6\cong0$. \hfill$\lozenge$
\end{Exp}

\begin{Lmm}\label{BrezProstegaDela} For $w\!\in\!\mathcal{S}_n\!\!\!\setminus\!\!\mathcal{F}_{\!n}\!=:\!\mathcal{T}_{\!n}$, the homology of $\dva{w}$ has only torsion.
\end{Lmm}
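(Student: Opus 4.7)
The plan is to argue by strong induction on $n$. The bases $n \leq 3$ are immediate, since $\mathcal{T}_2 = \emptyset$ and $\mathcal{T}_3 = \{(2,2,2)\}$, with $H_\ast\dva{2,2,2} \cong 0$ recorded in the introductory example. For the inductive step I would fix $w \in \mathcal{T}_n$ with $n \geq 4$ and walk through a hierarchy of cases based on the preceding lemmas.

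First I would reduce to the case $w \in \{2,\ldots,n-1\}^n$. If $w$ has two coordinates equal to $1$, Lemma~\ref{IndukcijaKompleks} applied to one of them turns the other into a $0$ entry, forcing $\dva{w} = 0$ since no basis wedge can carry a weight with a zero coordinate; the dual argument handles repeated $n$'s. If $w$ contains a $1$ or an $n$ but the repeated value $v$ lies in $\{2,\ldots,n-1\}$, Lemma~\ref{IndukcijaKompleks} produces a shifted $\dva{w'}$ with $w' \in \mathcal{S}_{n-1}$ that retains the repeat (possibly at value $v-1 \geq 1$ if one eliminated a $1$), so $w' \in \mathcal{T}_{n-1}$ and the inductive hypothesis closes this case.

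From now on assume $w \in \{2,\ldots,n-1\}^n$. Lemmas~\ref{ZaporedniDvojki} and \ref{TriDvojke} dispatch consecutive pairs or triples of $2$'s and $(n-1)$'s, leaving at most two non-consecutive $2$'s and at most two non-consecutive $(n-1)$'s. When $w$ has exactly two $2$'s, a cyclic shift via Lemma~\ref{ZamaknjenKompleks} places them at positions $1$ and $k$ with $3 \leq k \leq n-1$; then Lemma~\ref{StozecMedDvojkama}(2) yields $H_\ast\dva{w} \cong H_\ast\cone(\varphi)$ for a chain endomorphism $\varphi$ of $F^w_{k+1}$. Its long exact sequence reduces matters to showing $H_\ast F^w_{k+1}$ is torsion, which I would obtain by iterating the short filtration of $F^w_{k+1}$: by (\ref{eq.3.1}) its subquotients decompose into shifted $\dva{w''}$ with $w'' \in \mathcal{S}_{n-1}$, and each $w''$ still carries a repeat inherited from the second $2$ in $w$, so $w'' \in \mathcal{T}_{n-1}$ and induction applies. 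Lemma~\ref{ObratniKompleks} turns the case of two $(n-1)$'s into the previous one.

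The main obstacle is the residual regime $w \in \{3,\ldots,n-2\}^n$, where no $2$ or $n-1$ occurs (e.g.\ the $(3,3,3,3,3)$ case of Example~\ref{Primer33333}). Here I would apply the Section~4 filtration directly with $t = w_1 - 1 \geq 2$: by (\ref{eq.4.1}) the successive quotients are direct sums of shifted $\dva{w(i_1,\ldots,i_t)}$ with $w(i_1,\ldots,i_t) \in \mathcal{S}_{n-1}$. A short combinatorial argument should show that whenever $w$ has no $1$ and no $n$ but does have a repeat, every $w(i_1,\ldots,i_t)$ fails to be a permutation of $(1,\ldots,n-1)$, so lies in $\mathcal{T}_{n-1}$; the inductive hypothesis then furnishes torsion homology for every subquotient, and iterated long exact sequences propagate this to $H_\ast\dva{w}$. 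The combinatorial verification in this last step, together with the bookkeeping required to track the cone map $\varphi$ of Lemma~\ref{StozecMedDvojkama} against the filtration, is the most delicate part of the argument.
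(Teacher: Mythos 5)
Your reduction steps and your treatment of the residual regime $w\in\{3,\ldots,n-2\}^n$ agree with the paper (there, after a cyclic shift one assumes $w_1$ is minimal; if $w_1\geq3$ every entry of $w(i_1,\ldots,i_t)$ is at least $2$, so it cannot be a permutation of $(1,\ldots,n-1)$, and induction through the filtration applies). The gap is in your case of two $2$'s, and it sits exactly where the real difficulty of the lemma lies. The claim that every subquotient of the filtration of $F^w_{k+1}$ ``still carries a repeat inherited from the second $2$'' is false: by (\ref{eq.3.1}) the second $2$, at position $k$, becomes a $1$ in the subquotient indexed by $r$ for every $r\neq k$, and the resulting sequence can perfectly well be a permutation of $(1,\ldots,n-1)$. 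Already for $w=(2,3,2,3)$ one has $F^w_4\cong\dva{2,1,3}_1$ with $(2,1,3)\in\mathcal{F}_3$, so $H_\ast F^w_4\cong\Z$ is not torsion; the $\Z_2$ in $H_\ast\dva{2,3,2,3}$ arises because the cone map is multiplication by $2$ on that copy of $\Z$, not because $H_\ast F^w_4$ vanishes rationally. In general, reducing to $\cone\varphi$ via Lemma~\ref{StozecMedDvojkama}(2) yields torsion only if $\varphi_\ast\otimes\Q$ is an isomorphism, and for $k<n-1$ the map $\varphi$ of part (1) is not simply multiplication by an integer, so this would require a separate, nontrivial verification that you do not supply. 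The same counterexample shows your ``short combinatorial argument'' for the final step cannot extend beyond the all-entries-$\geq3$ regime: whenever $w_1=2$, some $w_i=n-1$ and the remaining entries form a permutation of $(2,\ldots,n-1)$, some $w(i)$ does lie in $\mathcal{F}_{n-1}$.

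The paper resolves precisely this situation differently: working over $\Q$, either every $w(i)$ lies in $\mathcal{T}_{n-1}$ (induction finishes), or else exactly two subquotients $\dva{w(j)}_1,\dva{w(i)}_1$ have nonzero rational homology, each of rank one, generated by $[e_{1j}\bigwedge_{(a,b)\in I_j}e_{a+1,b+1}]$ and $[e_{1i}\bigwedge_{(a,b)\in I_i}e_{a+1,b+1}]$ with $I_j=I_i\cup\{(j-1,i-1)\}$; the connecting homomorphism of the filtration sends one generator to $\pm$ the other, so the two free ranks cancel in the long exact sequence. Some such explicit cancellation (equivalently, a proof that the relevant $\varphi_\ast\otimes\Q$ is invertible) is unavoidable and is missing from your proposal. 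A lesser issue: sequences with exactly one $2$, at most one $n-1$, and a repeat among $\{3,\ldots,n-2\}$ fall outside your case division; they are harmless, since a $1$ can only arise in $w(i)$ from a second $2$, so all $w(i)$ then lie in $\mathcal{T}_{n-1}$, but they should be accounted for.
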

\begin{proof}
We use induction on $n$. Computing over $\Q$, by universal coefficient theorem, it suffices to show that $H_{\!\ast}\dva{w}\!\cong\!0$. The claim is trivial for $n\!=\!2$ since $\mathcal{T}_2\!=\!\emptyset$. Let $n\!>\!2$ and $w\!\in\!\mathcal{T}_n$. By Lemma \ref{IndukcijaKompleks} and induction hypothesis, we may assume that $2\!\leq\!w_1,\ldots,w_n\!\le\!n\!-\!1$. By Lemma \ref{ZamaknjenKompleks} we may assume that $w_1\!\leq\!w_i$ for all $i$. If $w_1\!\geq\!3$, all elements of the sequence $w(i_1,\ldots,i_{w_1\!-\!1})$ are at least 2, so $w(i_1,\ldots,i_{w_1\!-\!1})\!\in\! \mathcal{T}_{n\!-\!1}$. By (\ref{eq.4.1}) and induction, $H_{\!\ast} F^w_k\!/\!F^w_{k_{\!}+\!1}\cong0$ for all $k$, hence $H_{\!\ast}\dva{w}\cong0$.
\par Let $w_1\!=\!2$. If $w(i)\!\in\! \mathcal{T}_{n\!-\!1}$ for all $i$, then by (\ref{eq.4.1}) and induction, $H_{\!\ast} F^w_i\!/\!F^w_{i_{\!}+\!1}\cong0$ for all $i$, hence $H_{\!\ast}\dva{w}\cong0$. Suppose there exists $i\!\in\!\{2,\ldots,n\}$ with $w(i)\!\notin\!\mathcal{T}_{n\!-\!1}$. Then $w_i\!=\!n\!-\!1$, $w_1\!=\!2$, and other elements of $w$ form a permutation of $(2,\ldots,n\!-\!1)$, so there are exactly two numbers $j\!<\!i$ such that $w(j),w(i)\!\in\!\mathcal{F}_{n\!-\!1}$. In the filtration $0\!=\!F^w_{n+1} \!\!\leq\! \ldots\!\leq\!F^w_{i+1} \!\!\leq\! F^w_i \!\!\leq\! \ldots \!\leq\! F^w_{j+1} \!\!\leq\! F^w_j \!\!\leq\! \ldots \!\leq\! F^w_2\!\!=\!\dva{w}$, by (\ref{eq.3.1}) and (\ref{eq.2.1}) we have $H_{\!k} F^w_i\!/\!F^w_{i+1}\!\cong\! \icases{\Q}{\text{if }k=1+|I_i|}{0}{\text{if }k\neq1+|I_i|}{\scriptstyle}{-0pt}{\Big}$, $H_{\!k} F^w_j\!/\!F^w_{j+1}\!\cong\! \icases{\Q}{\text{if }k=1+|I_j|}{0}{\text{if }k\neq1+|I_j|}{\scriptstyle}{-0pt}{\Big}$, and $H_{\!\ast} F^w_r\!/\!F^w_{r+1}\!\cong\!0$ for $r\!\notin\!\{i,j\}$, where $I_i\!=\!\{$inversions of $w(i)\}$ and $I_j\!=\!\{$inversions of $w(j)\}$. Therefore $0\!\cong\!H_{\!\ast}F_{n+1}\!\cong\!\ldots\!\cong\!H_{\!\ast}F_{i+1}$, $H_{\!\ast}F_i\!\cong\! \icases{\Q}{\text{if }k=1+|I_i|}{0}{\text{if }k\neq1+|I_i|}{\scriptstyle}{-0pt}{\Big}$, $H_{\!\ast} F^w_i\!\cong\!\ldots\!\cong\!H_{\!\ast} F^w_{j+1}$, $H_{\!\ast} F^w_j\!\cong\!\ldots\!\cong\!H_{\!\ast}\dva{w}$. Consequently, it suffices to show that in the exact sequence\vspace{-1mm}
\[\small\begin{tikzpicture}[description/.style={fill=white, inner sep=1pt}]
    \matrix (a) [matrix of math nodes, row sep=4mm, column sep=3mm, text height=1.5ex, text depth=1.25ex]
    {\!\!\!\!\ldots\!\! & \!H_{\!k_{\!}+\!2}\!\frac{F^w_j}{F^w_{j_{\!}+\!1}}\! &\!H_{\!k_{\!}+\!1}\!F^w_{j_{\!}+\!1}\! &\!H_{\!k_{\!}+\!1}\!F^w_j\! &\!H_{\!k_{\!}+\!1}\!\frac{F^w_j}{F^w_{j_{\!}+\!1}}\! &\!H_{\!k}\!F^w_{j_{\!}+\!1}\! &\!H_{\!k}\!F^w_j\! &\!H_{\!k}\!\frac{F^w_j}{F^w_{j_{\!}+\!1}}\! &\!H_{\!k_{\!}-\!1}\!F^w_{j_{\!}+\!1}\! &\!\!\ldots\\[-2mm]
            &0                            &0                &             &\Q                           &\Q           &         &0                        &0\\};
    \path[->](a-1-1.east)edge node{}(a-1-2.west); \path[->](a-1-2.east)edge node[above]{$\chi$}(a-1-3.west); \path[->](a-1-3.east)edge node{}(a-1-4.west);
    \path[->](a-1-4.east)edge node{}(a-1-5.west); \path[->](a-1-5.east)edge node[above]{$\chi$}(a-1-6.west); \path[->](a-1-6.east)edge node{}(a-1-7.west);
    \path[->](a-1-7.east)edge node{}(a-1-8.west); \path[->](a-1-8.east)edge node[above]{$\chi$}(a-1-9.west); \path[->](a-1-9.east)edge node{}(a-1-10.west);
    \draw[white](a-1-2.south)edge node[rotate=90,black]{$\cong$}(a-2-2.north); \draw[white](a-1-3.south)edge node[rotate=90,black]{$\cong$}(a-2-3.north);
    \draw[white](a-1-5.south)edge node[rotate=90,black]{$\cong$}(a-2-5.north); \draw[white](a-1-6.south)edge node[rotate=90,black]{$\cong$}(a-2-6.north);
    \draw[white](a-1-8.south)edge node[rotate=90,black]{$\cong$}(a-2-8.north); \draw[white](a-1-9.south)edge node[rotate=90,black]{$\cong$}(a-2-9.north);
\end{tikzpicture}\vspace{-4mm}\]
the morphism $\chi_{k+1}$ for $k\!=\!|I_j|$ is bijective. Notice that $H_{\!k_{\!}+\!1}\!\frac{F^w_j}{F^w_{j_{\!}+\!1}}$ and $H_{\!k}\!F^w_{j_{\!}+\!1}\!\cong\! H_{\!k}\!\frac{F^w_i}{F^w_{i_{\!}+\!1}}$ are generated by $[e_{1j}\bigwedge_{(a,b)\in I_j}\!e_{a+1,b+1}]\!=:\![x]$ and $[e_{1i}\bigwedge_{(a,b)\in I_i}\!e_{a+1,b+1}]\!=:\![y]$. Since $I_j\!=\!I_{i\!}\cup_{\!}\{(j\!-\!1,i\!-\!1)\}$, we get $\chi([x])\!=\![\partial(x)]\!=\![\pm y]$, which concludes the argument.
\end{proof}

\begin{Lmm}\label{CorDual} $H_k\dva{w_n,\ldots, w_1}\cong H_{\binom{n}{2}-k-1}\dva{w_1,\ldots,w_n}$ for $(w_1,\ldots,w_n)\!\in\!\mathcal{T}_{\!n}$.
\end{Lmm}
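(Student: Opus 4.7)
The plan is to derive this as Poincar\'e duality for the unimodular Lie algebra $\mathfrak{nil}_n$, combined with Lemma~\ref{ObratniKompleks} and the universal coefficient theorem. Throughout I set $d:=\binom{n}{2}=\dim\mathfrak{nil}_n$ and $F:=\{(i,j);\,1\!\leq\!i\!<\!j\!\leq\!n\}$.

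First I would observe that the top-form wedge pairing $\langle e_M,e_N\rangle:=\epsilon(M,N)$, with $\epsilon(M,N)\in\{0,\pm1\}$ chosen so that $e_M\wedge e_N=\epsilon(M,N)\,e_F$ when $M\sqcup N=F$ and zero otherwise, is a perfect pairing $\Lambda^k\mathfrak{nil}_n\otimes\Lambda^{d-k}\mathfrak{nil}_n\to\Z$. A direct weight computation shows $\widetilde w_i(e_{F\setminus M})=(2i-n-1)-\widetilde w_i(e_M)$, equivalently $w_i(e_{F\setminus M})=n+1-w_i(e_M)$, so the pairing restricts to a perfect pairing $\dva{w}_k\otimes\dva{w^\ast}_{d-k}\to\Z$ where $w^\ast:=(n\!+\!1\!-\!w_1,\ldots,n\!+\!1\!-\!w_n)$. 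Because $\mathfrak{nil}_n$ is nilpotent, each $\mathrm{ad}\,x$ is nilpotent with trace $0$, so $\mathfrak{nil}_n$ is unimodular and classical Poincar\'e duality upgrades the above pairing to a duality of chain complexes (with the standard Koszul sign convention), yielding
$$H_k\dva{w}\;\cong\;H^{d-k}\dva{w^\ast}.$$
Applying Lemma~\ref{ObratniKompleks} to $w^\ast$ gives $\dva{w^\ast}\cong\dva{w_n,\ldots,w_1}$, and hence $H_k\dva{w}\cong H^{d-k}\dva{w_n,\ldots,w_1}$.

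Finally, for $w\!\in\!\mathcal{T}_n$ the reversed tuple $(w_n,\ldots,w_1)$ also lies in $\mathcal{T}_n$, so both sides have only torsion homology by Lemma~\ref{BrezProstegaDela}. The universal coefficient theorem, $H^j(C)\cong\mathrm{Hom}(H_j C,\Z)\oplus\mathrm{Ext}(H_{j-1}C,\Z)$, collapses to $H^j\cong H_{j-1}$ on finitely generated torsion groups, so $H^{d-k}\dva{w_n,\ldots,w_1}\cong H_{d-k-1}\dva{w_n,\ldots,w_1}$. Substituting and then swapping the roles $w\leftrightarrow(w_n,\ldots,w_1)$ (legitimate since reversal preserves $\mathcal{T}_n$) produces precisely the claim.

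The main obstacle I anticipate is verifying the chain-level compatibility of the duality pairing with the Chevalley--Eilenberg differential: that the Koszul-signed identification $e_M\mapsto\pm(e_{F\setminus M})^{\ast}$ intertwines $\partial$ with its dual $d$. This is where unimodularity enters in an essential way, namely the cross-terms coming from the two different ways of bracketing pairs in $M$ versus $F\setminus M$ cancel precisely because $\mathrm{tr}\,\mathrm{ad}\,e_{ij}=0$ for every generator $e_{ij}$. Given the sign bookkeeping already executed in the proof of Lemma~\ref{ObratniKompleks} and Lemma~\ref{ZamaknjenKompleks}, this verification is technical but routine; alternatively one may simply invoke the classical theorem of Koszul/Hazewinkel on Poincar\'e duality for unimodular Lie algebras and note that the wedge pairing manifestly respects the weight decomposition.
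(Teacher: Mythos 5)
Your proposal is correct and follows essentially the same route as the paper: the paper defines the map $\tau(e_{\!M})\!=\!\varepsilon_{\!M}f_{\!M^C}$ (which is exactly your top-form pairing), cites Hazewinkel for the fact that it is a chain isomorphism $C_\ast\!\to\!C^{N-\ast}$, performs the same weight computation $w_i(e_{\!M^C})\!=\!n\!+\!1\!-\!w_i(e_{\!M})$, and then combines Lemma~\ref{ObratniKompleks}, Lemma~\ref{BrezProstegaDela}, and the universal coefficient theorem just as you do. No gaps.
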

\begin{proof}
Let $(C^\ast\!,\delta^\ast)$ be the dual of the complex $(C_\ast,\partial_\ast)$ of $\frak{nil}_n$, let $f_{\!M}$ be the dual of a basis element $e_{\!M}$, and $N\!=\!\binom{n}{2}$. Define $\tau_\ast\!: C_\ast \!\to\! C^{N\!-\ast}$  by $\tau(e_{\!M})\!=\!\varepsilon_{\!M}f_{\!M^C}$, where $\varepsilon_{\!M}$ is the sign of the permutation $(M\!,M^C)$ of $\{(i,j);\, 1\!\le\!i\!<\!j\!\le\!n\}$. By \cite[p.640]{citearticleHazewinkelDTCLA}, $\tau_\ast$ is a chain isomorphism, i.e. $\tau_{k\!-\!1}\partial_k= \delta^{N\!-k}\tau_k$. For $e_{\!M}\!\in\!\dva{w_1,\ldots,w_n}$ we have
\[\begin{array}{r@{\hspace{3pt}}l}
w_i\big(\tau(e_{\!M})\big)&=i\!-\!|\{x;(x,i)\!\in\!M^C\}|\!+\!|\{x;(i,x)\!\in\!M^C\}|\\
&=i\!-\!\bigl(i\!-\!1\!-\!|\{x;(x,i)\!\in\!M\}|\bigr)\!+\!\bigl(n\!-\!i\!-\!|\{x;(i,x)\!\in\!M\}|\bigr)= n\!+\!1\!-\!w_i,
\end{array}\]
hence $\tau(\dva{w_1,\ldots,w_n}) \!=\! \dva{n\!+\!1\!-\!w_1,\ldots,n\!+\!1\!-\!w_n}^\ast \!\cong\! \dva{w_n,\ldots,w_1}^\ast$ by Lemma \ref{ObratniKompleks}. Now the result follows from Lemma \ref{BrezProstegaDela} and the universal coefficient theorem.
\end{proof}

\vspace{3mm}
\section{Computations}
We have $H_{\!\ast}\mathfrak{nil}_n\cong \bigl(\bigoplus_{w\in\mathcal{F}_{\!n}}\!H_{\!\ast}\dva{w}\bigr)\oplus \bigl(\bigoplus_{w\in\mathcal{T}_{\!n}}\!H_{\!\ast}\dva{w}\bigr)$. Free part $\bigoplus_{w\in\mathcal{F}_{\!n}}H_{\!\ast}\dva{w}$ is known from Theorem \ref{ProstiDel}. For the torsion part, we use Lemma \ref{IndukcijaKompleks}:\vspace{-1.5mm} 
\[\begin{array}{r@{\hspace{3pt}}l}
TH_\ast(\mathfrak{nil}_n)&\cong \bigl(\oplus_{1,n\notin w\in \mathcal{T}_{\!n}}H_\ast\dva{w}\bigr)\oplus\frac{\bigl(\oplus_{1\in w\in \mathcal{T}_{\!n}}H_\ast\dva{w}\bigr)
\oplus \bigl(\oplus_{n\in w\in \mathcal{T}_{\!n}}H_\ast\dva{w}\bigr)}{\oplus_{1,n\in w\in \mathcal{T}_{\!n}}H_\ast\dva{w}}\\[4pt]
&\cong \bigl(\oplus_{1,n\notin w\in \mathcal{T}_{\!n}}H_\ast\dva{w}\bigr)\oplus\frac{ \oplus_{w\in \mathcal{T}_{\!n\!-\!1}} \oplus_{k=1}^{n} H_\ast\dva{w}_{k_{\!}-_{\!}1}\oplus
 H_\ast\dva{w}_{n_{\!}-_{\!}k}}{ \oplus_{w\in\mathcal{T}_{\!n\!-\!2}} \oplus_{i\in[n\!-\!1],j\in[n]} H_\ast\dva{w}_{i\!-\!1\!+\!n\!-\!j}}\\[4pt]
&\cong \bigl(\oplus_{1,n\notin w\in \mathcal{T}_{\!n}}H_\ast\dva{w}\bigr)\oplus\frac{\oplus_{k=0}^{n_{\!}-_{\!}1} TH_{\ast+k}(\mathfrak{nil}_{n\!-\!1})^2}
{\oplus_{i=0}^{2n_{\!}-_{\!}3} TH_{\ast+i}(\mathfrak{nil}_{n\!-\!2})^{\min\{i\!+\!1,2n\!-\!2\!-\!i\}}}.
\end{array}\vspace{-1.5mm}\]
By induction and Lemmas \ref{TriDvojke}, \ref{ZaporedniDvojki}, it suffices to calculate only $H_{\!\ast}\dva{w}$ coming from $\widetilde{\mathcal{T}}_n\!:=\!\big\{w\!\in\!\mathcal{T}_n;\: 1,n\!\notin\!w,\: \nexists i\!: w_i\!=\!w_{i+1}\!\in\!\{2,n\!-\!1\},\: \nexists i\!<\!j\!<\!k\!: w_i\!=\!w_j\!=\!w_k\!\in\!\{2,n_{\!}\!-_{\!}\!1\}\big\}$. Define maps $\alpha,\beta,\gamma\!: \widetilde{\mathcal{T}}_n \!\to\! \widetilde{\mathcal{T}}_n$ by $\alpha(w_1,\ldots,w_n)\!=\! (w_2,\ldots,w_n,w_1)$, $\beta(w_1,\ldots,w_n)\!=\! (n\!+\!1\!-\!w_n,\ldots,n\!+\!1\!-\!w_1)$, $\gamma(w_1,\ldots,w_n)\!=\! (w_n,\ldots,w_1)$. Let $\sim$ be the smallest equivalence relation on $\widetilde{\mathcal{T}}_n$ with $w\!\sim\!\xi(w)$ for $\xi\!\in\!\{\alpha,\beta,\gamma\}$. By Lemmas \ref{ZamaknjenKompleks}, \ref{ObratniKompleks}, \ref{CorDual}, we need to compute $H_{\!\ast}$ only for one complex in each equivalence class.
\par \underline{Case $n\!=\!4$}: The set $\mathcal{S}_4$ consists of all permutations of $(1,\!1,\!4,\!4)$, $(1,\!2,\!3,\!4)$, $(1,\!3,\!3,\!3)$, $(2,\!2,\!2,\!4)$, $(2,\!2,\!3,\!3)$, whilst $\widetilde{\mathcal{T}}_4/\!_\sim \!=\! \{(2,3,2,3)\}$. Now $\dva{3,2,3,2}\!\cong\!\dva{2,3,2,3}_1$ has only 4 generators, so $H_\ast$ can be computed directly, but let us use Lemma \ref{StozecMedDvojkama}: $H_\ast\dva{2,3,2,3}\cong H_{\!\ast}\cone(\dva{2,\!1,\!3}_1\!\overset{\cdot2}{\rightarrow}\!\dva{2,\!1,\!3}_1\!)$. Since $\dva{2,1,3}\!=\!\langle e_{12}\rangle$, we conclude that $H_k\dva{2,3,2,3}\!\cong\!
\icases{\Z_2}{\text{if }k=2}{0}{\text{if }k\neq2}{\scriptstyle}{-0pt}{\Big}$. Hence the torsion part is $TH_k(\mathfrak{nil}_4)\cong \icases{\Z_2}{\text{if }k\in\{2,3\}}{0}{\text{if }k\notin\{2,3\}}{\scriptstyle}{-0pt}{\Big}$.
\par \underline{Case $n\!=\!5$}: Set $\widetilde{\mathcal{T}}_5/\!_\sim$ consists of $a\!=\!(2,3,4,2,4)$, $b\!=\!(2,3,3,3,4)$, $c\!=\!(2,3,3,4,3)$, $d\!=\!(3,3,3,3,3)$. By Lemma \ref{StozecMedDvojkama}, $H_\ast\dva{a}\cong H_\ast\cone(\dva{2,3,1,4}_1\!\overset{\cdot 3}{\to}\! \dva{2,3,1,4}_1)\cong \icases{\Z_3}{\text{if }k=3}{0}{\text{if }k\neq3}{\scriptstyle}{-0pt}{\Big}$. By Lemma \ref{DveTrojki}, $H_\ast\dva{b}\cong H_\ast\cone(\dva{1,3,2}_2 \!\overset{\cdot 2}{\to}\! \dva{1,3,2}_2)\cong \icases{\Z_2}{\text{if }k=3}{0}{\text{if }k\neq3}{\scriptstyle}{-0pt}{\Big}$, and $H_\ast\dva{c}\cong H_\ast\cone(\dva{3,2,1}_1 \!\overset{\cdot 2}{\to}\! \dva{3,2,1}_1)\cong \icases{\Z_2}{\text{if }k=4}{0}{\text{if }k\neq4}{\scriptstyle}{-0pt}{\Big}$. By Example \ref{Primer33333}, $H_\ast\dva{d}\cong0$. Because $\beta(a)\!=\!(2,4,2,3,4)\!=\!\alpha^3(a)$, $\beta(b)\!=\!(2,3,3,3,4)\!=\!b$, $\beta(c)\!=\!(3,2,3,3,4)\!=\!\alpha^4(c)$, and $\gamma(x)\!\neq\! \alpha^i(x)$ for all $x\!\in\!\{a,b,c\}$ and all $i$, we conclude that\vspace{-2mm}
\[\bigoplus_{w\in \widetilde{\mathcal{T}}_{\!n}}H_k\dva{w}=
\bigoplus_{x\in\{a,b,c\}}\bigoplus_{i\in\{0,\ldots,4\}}\bigl(H_k\dva{\alpha^i(x)}\oplus H_k\dva{\gamma\alpha^i(x)}\bigr)=
\left\{\begin{smallmatrix}
\Z_2^4\oplus \Z_3^2; &k=3,\\
\Z_2^6\oplus \Z_3^3; &k=4,\\
\Z_2^6\oplus \Z_3^3; &k=5,\\
\Z_2^4\oplus \Z_3^2; &k=6.
\end{smallmatrix}\right.\vspace{-2mm}\]
\par \underline{Case $n\!=\!6$} is still doable by hand. Set $\widetilde{\mathcal{T}}_6/\!_\sim$ has 28 elements: 9 cases are done by Lemma \ref{DveTrojki}, 6 by Lemma \ref{StozecMedDvojkama}, and the rest by examining their filtration. There are only 3 classes containing no 2 or $n\!-\!1$: $(3,3,3,4,4,4)$, $(3,3,4,3,4,4)$, $(3,4,3,4,3,4)$.
\par \underline{Cases $n\!=\!7,8$} require a computer. The set $\widetilde{\mathcal{T}}_7/\!_\sim$ has 250 elements, and $\widetilde{\mathcal{T}}_8/\!_\sim$ has 3485 elements. See the table below for the homology of $\mathfrak{nil}_7$ and $\mathfrak{nil}_8$.
\par \underline{Cases $n\!\geq\!9$}: The set $\widetilde{\mathcal{T}}_9/\!_\sim$ has $59\,102$ elements. We have not been able to compute, among other things, the homology of the complex $\dva{5,\!5,\!5,\!5,\!5,\!5,\!5,\!5,\!5}$.

\vspace{3mm}
\section{Afterword}

\subsection{Conclusion} We have seen that methods, designed for a specific family of Lie algebras, where we partition the problem into smaller pieces and solve only the nontrivial nonequivalent parts, can enable us to compute more than twice as much data compared with the usual approach.
\subsection{Acknowledgment} This research was supported by the Slovenian Research Agency (research core funding no. P1-0292, J1-7025, J1-8131).

\begin{center}\begin{tabular}{ l | l l }
    $k\backslash n$ & 7 & 8\\ \hline
0 & $\Z$ & $\Z$\\
1 & $\Z^6$ & $\Z^7$ \\
2 & $\Z^{20}\!\oplus\!\Z_2^{4}$ & $\Z^{27}\!\oplus\!\Z_2^{5}$ \\
3 & $\Z^{49}\!\oplus\!\Z_2^{35}\!\oplus\!\Z_3^6$ & $\Z^{76}\!\oplus\!\Z_2^{57}\!\oplus\!\Z_3^8$ \\
4 & $\Z^{98}\!\oplus\!\Z_2^{124}\!\oplus\!\Z_3^{27}\!\oplus\!\Z_4^{6}$ & $\Z^{174}\!\oplus\!\Z_2^{253}\!\oplus\!\Z_3^{45}\!\oplus\!\Z_4^{9}$ \\
5 & $\Z^{169}\!\oplus\!\Z_2^{303}\!\oplus\!\Z_3^{78}\!\oplus\!\Z_4^{28}\!\oplus\!\Z_5^{4}$ & $\Z^{343}\!\oplus\!\Z_2^{793}\!\oplus\!\Z_3^{168}\!\oplus\!\Z_4^{53}\!\oplus\!\Z_5^{8}$ \\
6 & $\Z^{259}\!\oplus\!\Z_2^{635}\!\oplus\!\Z_3^{168}\!\oplus\!\Z_4^{65}\!\oplus\!\Z_5^{17}$ & $\Z^{602}\!\oplus\!\Z_2^{2132}\!\oplus\!\Z_3^{479}\!\oplus\!\Z_4^{164}\!\oplus\!\Z_5^{47}$ \\
7 & $\Z^{359}\!\oplus\!\Z_2^{1122}\!\oplus\!\Z_3^{275}\!\oplus\!\Z_4^{112}\!\oplus\!\Z_5^{38}$ & $\Z^{961}\!\oplus\!\Z_2^{4880}\!\oplus\!\Z_3^{1050}\!\oplus\!\Z_4^{380}\!\oplus\!\Z_5^{145}$ \\
8 & $\Z^{455}\!\oplus\!\Z_2^{1674}\!\oplus\!\Z_3^{384}\!\oplus\!\Z_4^{160}\!\oplus\!\Z_5^{56}$
  & $\Z^{1415}\!\oplus\!\Z_2^{9882}\!\oplus\!\Z_3^{1927}\!\oplus\!\Z_4^{730}\!\oplus\!\Z_5^{309}\!\oplus\!\Z_8$ \\
9 & $\Z^{531}\!\oplus\!\Z_2^{2096}\!\oplus\!\Z_3^{481}\!\oplus\!\Z_4^{196}\!\oplus\!\Z_5^{63}$
  & $\Z^{1940}\!\oplus\!\Z_2^{17721}\!\oplus\!\Z_3^{3178}\!\oplus\!\Z_4^{1200}\!\oplus\!\Z_5^{524}\!\oplus\!\Z_8^{5}$ \\
10& $\Z^{573}\!\oplus\!\Z_2^{2238}\!\oplus\!\Z_3^{522}\!\oplus\!\Z_4^{210}\!\oplus\!\Z_5^{64}$
  & $\Z^{2493}\!\oplus\!\Z_2^{27826}\!\oplus\!\Z_3^{4781}\!\oplus\!\Z_4^{1728}\!\oplus\!\Z_5^{766}\!\oplus\!\Z_8^{12}$ \\
11& $\Z^{573}\!\oplus\!\Z_2^{2096}\!\oplus\!\Z_3^{481}\!\oplus\!\Z_4^{196}\!\oplus\!\Z_5^{63}$
  & $\Z^{3017}\!\oplus\!\Z_2^{38810}\!\oplus\!\Z_3^{6504}\!\oplus\!\Z_4^{2253}\!\oplus\!\Z_5^{1007}\!\oplus\!\Z_8^{18}$ \\
12& $\Z^{531}\!\oplus\!\Z_2^{1674}\!\oplus\!\Z_3^{384}\!\oplus\!\Z_4^{160}\!\oplus\!\Z_5^{56}$
  & $\Z^{3450}\!\oplus\!\Z_2^{48576}\!\oplus\!\Z_3^{7902}\!\oplus\!\Z_4^{2720}\!\oplus\!\Z_5^{1219}\!\oplus\!\Z_8^{17}$ \\
13& $\Z^{455}\!\oplus\!\Z_2^{1122}\!\oplus\!\Z_3^{275}\!\oplus\!\Z_4^{112}\!\oplus\!\Z_5^{38}$
  & $\Z^{3736}\!\oplus\!\Z_2^{54457}\!\oplus\!\Z_3^{8614}\!\oplus\!\Z_4^{3011}\!\oplus\!\Z_5^{1351}\!\oplus\!\Z_8^{11}$ \\
14& $\Z^{359}\!\oplus\!\Z_2^{635}\!\oplus\!\Z_3^{168}\!\oplus\!\Z_4^{65}\!\oplus\!\Z_5^{17}$
  & $\Z^{3836}\!\oplus\!\Z_2^{54457}\!\oplus\!\Z_3^{8614}\!\oplus\!\Z_4^{3011}\!\oplus\!\Z_5^{1351}\!\oplus\!\Z_8^{11}$ \\
15& $\Z^{259}\!\oplus\!\Z_2^{303}\!\oplus\!\Z_3^{78}\!\oplus\!\Z_4^{28}\!\oplus\!\Z_5^{4}$
  & $\Z^{3736}\!\oplus\!\Z_2^{48576}\!\oplus\!\Z_3^{7902}\!\oplus\!\Z_4^{2720}\!\oplus\!\Z_5^{1219}\!\oplus\!\Z_8^{17}$ \\
16 & $\Z^{169}\!\oplus\!\Z_2^{124}\!\oplus\!\Z_3^{27}\!\oplus\!\Z_4^{6}$ & $\Z^{3450}\!\oplus\!\Z_2^{38810}\!\oplus\!\Z_3^{6504}\!\oplus\!\Z_4^{2253}\!\oplus\!\Z_5^{1007}\!\oplus\!\Z_8^{18}$ \\
17 & $\Z^{98}\!\oplus\!\Z_2^{35}\!\oplus\!\Z_3^{6}$ & $\Z^{3017}\!\oplus\!\Z_2^{27826}\!\oplus\!\Z_3^{4781}\!\oplus\!\Z_4^{1728}\!\oplus\!\Z_5^{766}\!\oplus\!\Z_8^{12}$ \\
18 & $\Z^{49}\!\oplus\!\Z_2^{4}$ & $\Z^{2493}\!\oplus\!\Z_2^{17721}\!\oplus\!\Z_3^{3178}\!\oplus\!\Z_4^{1200}\!\oplus\!\Z_5^{524}\!\oplus\!\Z_8^{5}$ \\
19 & $\Z^{20}$ & $\Z^{1940}\!\oplus\!\Z_2^{9882}\!\oplus\!\Z_3^{1927}\!\oplus\!\Z_4^{730}\!\oplus\!\Z_5^{309}\!\oplus\!\Z_8$ \\
20 & $\Z^6$ & $\Z^{1415}\!\oplus\!\Z_2^{4880}\!\oplus\!\Z_3^{1050}\!\oplus\!\Z_4^{380}\!\oplus\!\Z_5^{145}$ \\
21 & $\Z$ & $\Z^{961}\!\oplus\!\Z_2^{2132}\!\oplus\!\Z_3^{479}\!\oplus\!\Z_4^{164}\!\oplus\!\Z_5^{47}$ \\
22 & & $\Z^{602}\!\oplus\!\Z_2^{793}\!\oplus\!\Z_3^{168}\!\oplus\!\Z_4^{53}\!\oplus\!\Z_5^{8}$ \\
23 & & $\Z^{343}\!\oplus\!\Z_2^{253}\!\oplus\!\Z_3^{45}\!\oplus\!\Z_4^{9}$ \\
24 & & $\Z^{174}\!\oplus\!\Z_2^{57}\!\oplus\!\Z_3^{8}$ \\
25 & & $\Z^{76}\!\oplus\!\Z_2^{5}$ \\
26 & & $\Z^{27}$ \\
27 & & $\Z^7$ \\
28 & & $\Z$
\end{tabular}\end{center}\vspace{-1.5mm}

\vspace{-1.5mm}
\end{document}